\documentclass[11pt]{article}
\usepackage[total={6in, 9in}]{geometry}
 \usepackage{amsfonts}
\usepackage{amsthm}
\usepackage{amssymb}
\usepackage{amsmath}
\usepackage{mathrsfs}
\usepackage{cases}
\usepackage{latexsym,bm}
\usepackage{indentfirst}
\usepackage{color}
\usepackage{ifpdf}
\usepackage{graphicx}
\usepackage{psfrag}
\usepackage{thmtools}
\usepackage[
pdfauthor={},
pdftitle={},
pdfstartview=XYZ,
bookmarks=true,
colorlinks=true,
linkcolor=blue,
urlcolor=blue,
citecolor=blue,
bookmarks=true,
linktocpage=true,
hyperindex=true
]{hyperref}

\usepackage[natural]{xcolor}
\usepackage{xurl}
\usepackage{tikz}
\usepackage{subfigure}
\usepackage{tikz-3dplot}
\usepackage{pgf,pgfplots}
\pgfplotsset{compat=1.18}
\usepackage{enumerate} 

\newtheorem{theorem}{\textbf{Theorem}}[section]

\newtheorem{lemma}[theorem]{\textbf{Lemma}}
\newtheorem{claim}{\textbf{Claim}}
\newtheorem{conjecture}[theorem]{\textbf{Conjecture}}

\newcommand{\biburl}[1]{\url{#1}}

\usepackage{pdflscape}

\begin{document}
\title{Degree sequence condition for  pancyclicity in tough graphs}

\author{Songling Shan\footnote{Auburn University, Department of Mathematics and Statistics, Auburn, AL 36849.
		Email: {\tt szs0398@auburn.edu}.   
		Supported in part by NSF grant DMS-2451895.}
	\qquad 
	Zach Warren \footnote{Auburn University, Department of Mathematics and Statistics, Auburn, AL 36849.
		Email:	{\tt zzw0115@auburn.edu}. }
}

\date{September 18, 2026}
\maketitle

\begin{abstract}
   Let $t \ge 1$ be an integer, and let $G$ be a $t$-tough $n$-vertex graph with degree sequence $d_1, d_2, \ldots, d_n$ in non-decreasing order.   
   In 1995, Ho\`ang conjectured that if $G$ is Hamiltonian and, for every integer $i$ satisfying $t\le i<n/2$, $d_i\le i$, and $d_{n-i+t}<n-i$, one has $d_j + d_{n-j+t} \ge n$ for all $j$ with $i < j < \frac{n}{2}$, then $G$ is pancyclic or bipartite. 
   In this paper, we disprove the conjecture for $t = 1$ and confirm it for all $t \ge 7$.
\end{abstract}

\emph{\textbf{Keywords}:}   Pancyclicity; Toughness; Degree sequence; $s$-Hamiltonicity.

\section{Introduction}

We consider only finite simple graphs. Let $G$ be a graph on $n$
vertices. Denote by $V(G)$ and $E(G)$ the vertex set and edge set
of $G$, respectively. For a vertex $v\in V(G)$, let $d_G(v)$
denote the degree of $v$ in $G$. We omit the subscript $G$ when
the graph is clear from the context. The degrees of the vertices
of $G$ arranged in non-decreasing order form the \emph{degree
	sequence} of $G$, denoted by $d_1,d_2,\ldots,d_n$. A graph is
\emph{Hamiltonian} if it contains a cycle through all its vertices.
In 1972, Chv\'atal~\cite[Theorem 1]{chvatal1972} proved the following degree
sequence condition for Hamiltonicity.

\begin{theorem}[{Chv\'atal~\cite[Theorem 1]{chvatal1972}}]
	\label{thm_chvatal-deg-seq}
	Let $G$ be a graph on $n\ge 3$ vertices with degree sequence
	$d_1,d_2,\ldots,d_n$. If, for every integer $i$ with $1\le i<n/2$,
	$d_i\le i$ implies $d_{n-i}\ge n-i$, then $G$ is Hamiltonian.
\end{theorem}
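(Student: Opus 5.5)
We prove Theorem~\ref{thm_chvatal-deg-seq} by the standard edge-maximal counterexample argument. Suppose the statement fails, and let $G$ be a counterexample on $n\ge 3$ vertices. Add edges to $G$ one at a time for as long as the result stays non-Hamiltonian. Adding an edge never decreases any term of the degree sequence, so the Chv\'atal condition is preserved. Hence we may assume $G$ is a maximal non-Hamiltonian graph satisfying the hypothesis. The complete graph $K_n$ is Hamiltonian, so $G\ne K_n$. By maximality, adding any non-edge $uv$ creates a Hamiltonian cycle, and that cycle must use $uv$. Therefore $G$ contains a Hamiltonian path $u=x_1,x_2,\ldots,x_n=v$ between any two nonadjacent vertices $u,v$.

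The first key step is the usual crossing (P\'osa/Ore) argument. Fix a nonadjacent pair $u,v$ with Hamiltonian path $x_1\cdots x_n$. Suppose $x_1x_{k+1}\in E(G)$ and $x_kx_n\in E(G)$ for some $k$. Then
\[
x_1x_{k+1}x_{k+2}\cdots x_nx_kx_{k-1}\cdots x_1
\]
is a Hamiltonian cycle, a contradiction. So the sets $\{k: x_1x_{k+1}\in E\}$ and $\{k: x_kx_n\in E\}$ are disjoint subsets of $\{1,\ldots,n-1\}$. This gives $d(u)+d(v)\le n-1$ for every nonadjacent pair.

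The second step chooses the pair carefully. Among all nonadjacent pairs, pick $u,v$ with $d(u)\le d(v)$ and $d(u)+d(v)$ as large as possible. Set $i=d(u)$. Since $2i\le d(u)+d(v)\le n-1$, we have $i<n/2$.

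\begin{itemize}
\item Consider the vertices $x_k$ with $x_1x_{k+1}\in E$. There are $i$ of them, and each is nonadjacent to $v=x_n$, by the crossing argument. The maximality of $d(u)+d(v)$ forces $d(x_k)\le d(u)=i$ for each of them. So at least $i$ vertices have degree at most $i$, which gives $d_i\le i$.
\item Now consider the vertices other than $v$ that are nonadjacent to $v$. There are $n-1-d(v)\ge i$ of them. For each such $w$, maximality gives $d(w)+d(v)\le d(u)+d(v)$, so $d(w)\le i$. This only reproves the previous count, so the useful count is the symmetric one below.
\item Consider instead the vertices nonadjacent to $u$. Together with $u$ itself there are $n-d(u)=n-i$ of them. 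Each such $w\ne u$ satisfies $d(w)\le d(v)\le n-1-i<n-i$ by maximality, and $u$ itself has degree $i<n-i$.
\end{itemize}

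So at least $n-i$ vertices have degree less than $n-i$, which means $d_{n-i}<n-i$. Combined with $d_i\le i$ and $1\le i<n/2$, this contradicts the hypothesis. We also need $i\ge 1$. If $i=0$, then $u$ is isolated, and the condition applied at $i=1$ gives $d_1=0\le 1$, hence $d_{n-1}\ge n-1$. But a vertex of degree $n-1$ is adjacent to $u$, so the pair could be re-chosen, or this case can be handled directly.

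The main obstacle is the bookkeeping in the second step. One must choose the extremal pair so that both counts, ``at least $i$ vertices of degree $\le i$'' and ``at least $n-i$ vertices of degree $<n-i$'', come out exactly. One must also make sure $u$ is included in the second count. I would follow Chv\'atal's original counting and check the boundary cases, especially $i\ge1$, carefully.
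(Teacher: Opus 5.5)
The paper does not prove this theorem. It is quoted from Chv\'atal~\cite[Theorem~1]{chvatal1972} as background, so there is no in-paper argument to compare against.

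Your proof is correct, and it is the standard argument. You pass to an edge-maximal non-Hamiltonian graph; the hypothesis survives because every $d_j$ can only increase when edges are added. The crossing argument then gives $d(u)+d(v)\le n-1$ for every nonadjacent pair. You fix a nonadjacent pair with $d(u)\le d(v)$ and $d(u)+d(v)$ maximum, and count.

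Both counts come out exactly, so the hedging in your last paragraph can go.
\begin{enumerate}
\item The $i$ vertices $x_k$ with $x_1x_{k+1}\in E(G)$ all have degree at most $i$, which gives $d_i\le i$. As your second bullet observes, the $n-1-d(v)\ge i$ nonneighbours of $v$ also have degree at most $i$; that route uses the Hamilton path only through the degree-sum bound.
\item The vertex $u$ together with its $n-1-i$ nonneighbours is a set of $n-i$ vertices, each of degree at most $d(v)\le n-1-i$. This gives $d_{n-i}<n-i$.
\end{enumerate}

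The boundary case $i\ge 1$ is simpler than you make it. Since $u=x_1$ is an end of a Hamilton path, $ux_2\in E(G)$, so $d(u)\ge 1$.

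Your alternative for that case also works, but it should be stated as a contradiction. The hypothesis at index $1$ (valid since $n\ge 3$) yields a vertex of degree $n-1$. That vertex is adjacent to the supposedly isolated $u$, which is impossible. ``The pair could be re-chosen'' is not the right conclusion to draw.
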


Let $c(G)$ denote the number of components of $G$. For a real
number $t\ge 0$, a graph $G$ is \emph{$t$-tough} if
$|S|\ge t\,c(G-S)$ for every $S\subseteq V(G)$ with $c(G-S)\ge 2$.
The \emph{toughness} of a noncomplete graph $G$, denoted by
$\tau(G)$, is the largest $t$ for which $G$ is $t$-tough.
If $G$ is complete, then $\tau(G)$ is defined to be $\infty$.
Chv\'atal~\cite{chvatal1973} introduced this concept in 1973.
Every Hamiltonian graph is $1$-tough. Using this necessary
condition, Ho\`ang~\cite{hoang1995} obtained several
generalizations of Theorem~\ref{thm_chvatal-deg-seq}, including
the following result.

\begin{theorem}[Ho\`ang~{\cite[Theorem~6]{hoang1995}}]
	\label{thm_hoang-main}
	Let $G$ be a $1$-tough graph on $n\ge 3$ vertices with degree
	sequence $d_1,d_2,\ldots,d_n$. Suppose that, for every integer
	$i$ with $1\le i<n/2$, $d_i\le i$ and $d_{n-i+1}<n-i$ imply
	$d_j+d_{n-j+1}\ge n$ for all integers $j$ with
	$i<j\le\lceil n/2\rceil$. Then $G$ is Hamiltonian.
\end{theorem}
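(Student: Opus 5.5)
We follow the standard route for Chv\'atal-type conditions: pass to a closure and then argue by contradiction. By the Bondy--Chv\'atal closure lemma, if $u,v$ are nonadjacent with $d(u)+d(v)\ge n$, then $G$ is Hamiltonian if and only if $G+uv$ is. Adding edges keeps $G$ $1$-tough and can only raise the degrees $d_i$. Hence the hypothesis of Theorem~\ref{thm_hoang-main} is preserved, provided we check the direction carefully: raising degrees can make ``$d_i\le i$'' fail, which only weakens what we need to verify, and can make ``$d_j+d_{n-j+1}\ge n$'' easier to satisfy.

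So we may assume $G$ is \emph{edge-maximal} non-Hamiltonian: $G$ is not Hamiltonian, but $G+uv$ is Hamiltonian for every non-edge $uv$. Then every non-edge $uv$ satisfies $d(u)+d(v)\le n-1$, and $G+uv$ has a Hamiltonian cycle through $uv$. Taking the resulting Hamiltonian $u$--$v$ path $u=x_1,\dots,x_n=v$ gives the usual crossing restriction: if $x_{k+1}\sim u$ then $x_k\not\sim v$.

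Among all non-edges, choose $u,v$ with $d(u)\le d(v)$ and $d(u)+d(v)$ maximum, and set $i=d(u)$. The standard counting argument then applies.
\begin{itemize}
\item The $i$ predecessors $x_k$ of the neighbours $x_{k+1}$ of $u$ are all non-adjacent to $v$. By maximality each has degree at most $d(u)\le i$, so $d_i\le i$.
\item Similarly, the vertices non-adjacent to $u$ other than $u$ number $n-1-i$. Each has degree at most $d(v)\le n-1-i$. Together with $u$ this gives $n-i$ vertices of degree less than $n-i$, so $d_{n-i}<n-i$.
\end{itemize}
To reach Ho\`ang's weaker conclusion $d_{n-i+1}<n-i$, we need one more vertex of degree less than $n-i$. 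Toughness should supply this, because $d(v)\le n-1-i$ is tight only in a near-extremal configuration.

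By the hypothesis of Theorem~\ref{thm_hoang-main}, we then have $d_j+d_{n-j+1}\ge n$ for all $j$ with $i<j\le\lceil n/2\rceil$. Since $i<n/2$ (because $d(u)+d(v)\le n-1$), we can take $j=i+1$. This gives at least $n-i$ vertices of degree at least $d_{n-i}$, and about $n-i$ vertices of degree at least $d_{i+1}$. The main task is to convert this into a contradiction using $1$-toughness.

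The natural candidate cut set is $S=N(u)$, or the set of neighbours of $v$ on the path arranged appropriately. If low-degree vertices form an independent-ish set $I$ of about $i+1$ vertices all nonadjacent to $v$, then deleting their neighbourhoods leaves more than $|S|$ components. This happens unless the high-degree vertices are adjacent across, and in that case the sum condition lets us close a Hamiltonian cycle by rerouting along the path.

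I expect the main obstacle to be the boundary case in which the counting gives exactly $d_{n-i}<n-i$ but not $d_{n-i+1}<n-i$. This is exactly where Ho\`ang's strengthening over Chv\'atal lives, since the $+1$ shift in the index is the whole content. Our plan for this case is as follows. Suppose all remaining vertices have degree at least $n-i$. Then each vertex $w$ outside the $n-i$ low vertices is adjacent to nearly everything. We would show that $\{u\}\cup\{$low vertices$\}$ must contain $i+1$ pairwise nonadjacent vertices whose neighbourhoods all lie inside a set of size $i$. Removing that set of size $i$ leaves at least $i+1$ components, contradicting $1$-toughness. Alternatively, we would show that a high-degree vertex allows a rotation of the path that creates a pair with a larger degree sum, contradicting the maximal choice of $u,v$.

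Finally, the range $j\le\lceil n/2\rceil$ rather than $j<n/2$ should be needed only for odd $n$, to handle $i=(n-1)/2$.
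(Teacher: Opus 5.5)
The paper does not prove this statement (it is quoted from Ho\`ang~\cite{hoang1995}), so your proposal has to stand on its own. As written, it has a genuine gap. What you actually prove is Chv\'atal's classical counting. In an edge-maximal non-Hamiltonian supergraph (a legitimate reduction, since the hypothesis is monotone under adding edges), the non-edge $uv$ of maximum degree sum gives $i=d(u)<n/2$ with $d_i\le i$ and $d_{n-i}<n-i$. Everything that separates Ho\`ang's theorem from Chv\'atal's is then left as an intention (``toughness should supply this'', ``we would show'', ``alternatively'').

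The first missing piece is $d_{n-i+1}<n-i$, and it cannot be waved through. The hypothesis of Theorem~\ref{thm_hoang-main} holds vacuously when no index satisfies $d_i\le i$ and $d_{n-i+1}<n-i$. Hence producing such an index in a $1$-tough non-Hamiltonian graph is by itself the $t=1$ case of Ho\`ang's toughness analogue of Chv\'atal's theorem, and you need it at the specific index $d(u)$. The bad case is exactly when every vertex of $N(u)$ has degree at least $n-i$. Then the antecedent of the hypothesis is false at $i$, so the hypothesis says nothing there. You must either get a contradiction from $1$-toughness alone or move to another index, and you do neither. In this case $N(u)$ is a clique, no two neighbours of $u$ are consecutive on the Hamilton path, and $N(u)$ cuts the path into exactly $i+1$ nonempty pieces. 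The cut $S=N(u)$ contradicts $1$-toughness precisely when no edge joins two different pieces. Maximality of $d(u)+d(v)$ only bounds degrees and does not exclude such edges, and you give no exchange argument that would.

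Second, even granting the antecedent, you use the consequent only at $j=i+1$, where it yields no contradiction. Together with $d_{n-i}\le n-1-i$ it merely gives $d_{i+1}\ge i+1$, i.e.\ the $i$ predecessors are exactly the vertices of degree at most $i$. The inequalities for $i+1<j\le\lceil n/2\rceil$ never enter your argument. In the closure they make each such $v_j$ adjacent to every other vertex among $v_{n-j+1},\ldots,v_n$. The midpoint inequality $d_{\lceil n/2\rceil}+d_{\lfloor n/2\rfloor+1}\ge n$ makes the top $\lfloor n/2\rfloor+1$ vertices a clique. One then still has to use $1$-toughness, for instance through a Hall-type condition, to route a Hamilton cycle through the low-degree vertices. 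Compare Claims~\ref{claim_matching} and~\ref{claim_1-hamiltonian}, which do the analogous job for $t\ge 7$ with far more room. None of this is present.

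Finally, your closing remark is inaccurate. For even $n$ and $i=n/2-1$, the hypothesis with $j\le\lceil n/2\rceil$ consists exactly of $d_{n/2}+d_{n/2+1}\ge n$, while with $j<n/2$ it is vacuous. So the midpoint inequality matters for both parities, not only for odd $n$.
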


A graph on $n$ vertices is \emph{pancyclic} if it contains a cycle
of every length from $3$ to $n$. Bondy's Meta
Conjecture~\cite{bondy1971,bondy1971meta} asserts that most
nontrivial sufficient conditions for Hamiltonicity also imply
pancyclicity, apart from a simple family of exceptional graphs.
In the setting of degree sequences, Schmeichel and
Hakimi~\cite[Theorem, p.~23]{schmeichel1974} proved that every graph satisfying
the hypothesis of Theorem~\ref{thm_chvatal-deg-seq} is pancyclic
or bipartite. Ho\`ang~\cite[Theorem~9]{hoang1995} showed that the same
conclusion holds under the hypotheses of
Theorem~\ref{thm_hoang-main}. More generally, he proved the
following theorem.

\begin{theorem}[Ho\`ang~{\cite[Theorem~9]{hoang1995}}]
	\label{thm_hoang-tough}
	Let $t\ge 1$ be an integer and $G$ be a Hamiltonian $t$-tough
	graph on $n\ge 3$ vertices with degree sequence
	$d_1,d_2,\ldots,d_n$. Suppose that, for every integer $i$ with
	$t\le i<n/2$, $d_i\le i$ and $d_{n-i+t}<n-i$ imply
	$d_j+d_{n-j+t}\ge n$ for all integers $j$ with
	$i<j\le\lceil n/2\rceil$. Then $G$ is pancyclic or bipartite.
\end{theorem}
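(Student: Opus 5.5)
The plan is to reduce Theorem~\ref{thm_hoang-tough} to a pancyclicity criterion for Hamiltonian graphs with many edges, in the spirit of Bondy's theorem: a Hamiltonian graph on $n$ vertices with at least $n^2/4$ edges is pancyclic or is $K_{n/2,n/2}$. Since $G$ is Hamiltonian by hypothesis, it suffices to show $|E(G)|\ge n^2/4$, or, failing that, to find all cycle lengths directly using a Hamiltonian cycle $C$ and the degree conditions.

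First I would locate the index $i_0$ at which the degree sequence switches from small to large. Let $i_0$ be the largest $i$ with $t\le i<n/2$ such that $d_i\le i$ and $d_{n-i+t}<n-i$. If no such $i$ exists, then for every $i\ge t$ either $d_i>i$ or $d_{n-i+t}\ge n-i$. For $i<t$ we get $d_i\ge 2t>i$ from $t$-toughness, since $\delta(G)\ge 2t$ for noncomplete $G$. A Chv\'atal-type condition then holds and makes $\sum_j d_j\ge n^2/2$. When $i_0$ exists, the hypothesis gives $d_j+d_{n-j+t}\ge n$ for all $j$ with $i_0<j\le\lceil n/2\rceil$. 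Pairing $j$ with $n-j+t$ and summing gives
\[
\sum_j d_j \ge n\bigl(\lceil n/2\rceil - i_0\bigr) + \sum_{j\le i_0} d_j + (\text{remaining terms}).
\]
The deficit from the first $i_0$ vertices must then be compensated. Here toughness is used: the $i_0$ low-degree vertices have degree at most $i_0$. If there are many of them and their neighbourhoods are confined, then removing a suitable set $S$ of size about $i_0$ leaves more than $|S|/t$ components. This contradicts $t$-toughness unless the high-degree vertices have degrees close to $n$. Quantifying this gives the edge count bound.

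The main obstacle is that the edge count may genuinely fall short of $n^2/4$ when $i_0$ is relatively large. In that case I would abandon the counting approach and argue directly on the Hamiltonian cycle $C=v_1v_2\cdots v_nv_1$. Following Schmeichel--Hakimi, suppose $G$ has no cycle of length $\ell$. Then for each pair of consecutive vertices $v_k,v_{k+1}$ on $C$, the chord patterns forbid many pairs: if $v_kv_{k+s}$ and $v_{k+1}v_{k+s+1-\ell+\ldots}$ are both present, they close an $\ell$-cycle. This yields $d(v_k)+d(v_{k+1})\le n$ for consecutive pairs, with slack analysis as in the bipartite extremal case. Combining this with the degree-sum inequalities of the previous step, which hold for all $j>i_0$, gives at least one pair of adjacent high-degree vertices with $d(v_k)+d(v_{k+1})>n$. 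The exception is the bipartite balanced structure, where equality forces $G$ to be bipartite. I expect controlling the interaction between the low-degree block and the cycle structure via toughness to be the delicate step. I would first try to show that the low-degree vertices form an independent set and are separated on $C$, and then apply toughness to $S=N(\text{low block})$.
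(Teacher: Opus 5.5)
\paragraph{There is a genuine gap.} Your central reduction fails at the first step: the hypotheses do not force $|E(G)|\ge n^2/4$, even when no index $i$ with $d_i\le i$ exists. Here is a counterexample.
\begin{itemize}
\item Take $t=1$, $n=10$, an independent set $A=\{a_1,\ldots,a_5\}$ and a set $B=\{b_1,\ldots,b_5\}$.
\item Join $a_i$ to $b_1,\ldots,b_{i+1}$ for $i\le 4$, join $a_5$ to all of $B$, and add the edges $b_2b_5,b_3b_4,b_3b_5,b_4b_5$.
\item The degree sequence is $2,3,4,5,5,5,5,5,6,6$. So $d_i>i$ for every $i<5$, and the hypothesis, and even Chv\'atal's condition, holds vacuously.
\item The cycle $a_1b_1a_2b_3a_3b_4a_4b_5a_5b_2a_1$ is a Hamilton cycle, so $G$ is $1$-tough, yet $|E(G)|=23<25$.
\end{itemize}
Similar constructions give roughly $3n^2/16$ edges in general. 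So the claim that a Chv\'atal-type condition yields $\sum_j d_j\ge n^2/2$ is false, and Bondy's edge-count theorem cannot drive the proof. The case without a bad index needs a genuine pancyclicity theorem for such degree sequences, namely Ho\`ang's Theorem~7 (Lemma~\ref{lem_hoang-no-bad-index}). Toughness cannot repair the count either, since for $t=1$ it gives nothing beyond Hamiltonicity.

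\paragraph{The fallback has no mechanism for its key step.} The inequalities $d_j+d_{n-j+t}\ge n$ compare vertices by their positions in the degree ordering, not by their positions on $C$. They also give $\ge n$ rather than $>n$. Nothing in your sketch produces two consecutive vertices of $C$ with degree sum exceeding $n$. Your argument also never singles out $j=\lceil n/2\rceil$. Yet Theorem~\ref{thm_counterexample} shows that for $t=1$ the statement becomes false without exactly that inequality, so a correct proof must use it essentially.

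\paragraph{The missing idea.} Apply the hypothesis at a bad index $h$ with $j=\lceil n/2\rceil$, giving $d_{\lceil n/2\rceil}+d_{\lfloor n/2\rfloor+t}\ge n$, and then count high-degree vertices.
\begin{itemize}
\item If $d_{\lceil n/2\rceil}\ge n/2$, at least $\lfloor n/2\rfloor+1$ vertices have degree at least $n/2$, and Lemma~\ref{lem_hoang-R-half} applies. The exception $S_n$ is ruled out: it is not $2$-tough; for $t=1$ and $n\ge 8$ it violates the hypothesis at $i=2$, $j=3$, since $d_3+d_{n-2}=2+n/2<n$; and $S_4=C_4$ is bipartite.
\item Otherwise $d_{\lfloor n/2\rfloor+t}>n/2$, so at least $\lceil n/2\rceil-t+1$ vertices have degree greater than $n/2$. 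Lemma~\ref{lem_hoang-R-third} finishes when $n>6t-6$.
\item Since $2t\le\delta(G)\le d_h\le h<n/2$ gives $n>4t$, the remaining range occurs only for $t\ge 4$. There $\alpha(G)\le n/(t+1)<6\le\kappa(G)-1$, so the Chv\'atal--Erd\H{o}s argument of \eqref{eq_small-order} covers it.
\end{itemize}
These counting lemmas are what actually place two high-degree vertices consecutively on a Hamilton cycle. The paper quotes the theorem from Ho\`ang without reproving it. The ingredients it records in Section~\ref{sec_preliminaries} assemble into a proof along these lines.
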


Ho\`ang~\cite[Conjecture~5]{hoang1995} conjectured that
Theorem~\ref{thm_hoang-tough} remains true when the degree-sum
inequality is required only for $i<j<n/2$, rather than
$i<j\le\lceil n/2\rceil$.

The difference is the degree-sum inequality at
$j=\lceil n/2\rceil$. In particular, if
$i=\lceil n/2\rceil-1$, then there is no integer $j$ with
$i<j<n/2$, so the conjectured condition imposes no degree-sum
requirement at this index. Theorem~\ref{thm_hoang-tough}, however,
still requires $d_{\lceil n/2\rceil}+d_{\lfloor n/2\rfloor+t}\ge n$.
Our counterexamples in Section~\ref{sec_counterexample} exploit
the absence of this midpoint inequality.

\begin{conjecture}[{Ho\`ang~\cite[Conjecture~5]{hoang1995}}]
	\label{conj_hoang}
	Let $t\ge 1$ be an integer and $G$ be a Hamiltonian $t$-tough
	graph on $n\ge 3$ vertices with degree sequence
	$d_1,d_2,\ldots,d_n$. Suppose that, for every integer $i$ with
	$t\le i<n/2$, $d_i\le i$ and $d_{n-i+t}<n-i$ imply
	$d_j+d_{n-j+t}\ge n$ for all integers $j$ with $i<j<n/2$.
	Then $G$ is pancyclic or bipartite.
\end{conjecture}

In this paper, we disprove Conjecture~\ref{conj_hoang} for
$t=1$ and confirm it for all integers $t\ge 7$. Our first
result gives a counterexample of every order at least $10$.

\begin{theorem}
	\label{thm_counterexample}
	For every integer $n\ge 10$, there exists a Hamiltonian
	$1$-tough graph $G$ on $n$ vertices with degree sequence
	$d_1,d_2,\ldots,d_n$ such that, for every integer $i$ with
	$1\le i<n/2$, $d_i\le i$ and $d_{n-i+1}<n-i$ imply
	$d_j+d_{n-j+1}\ge n$ for all integers $j$ with $i<j<n/2$,
	but $G$ is neither pancyclic nor bipartite.
\end{theorem}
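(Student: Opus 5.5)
The plan is to reduce the degree-sequence hypothesis to a plain minimum-degree condition, and then build explicit graphs with that minimum degree that are Hamiltonian, $1$-tough, contain an odd cycle, and miss one cycle length.

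\textbf{Reduction.} Put $m=\lceil n/2\rceil-1$. For $i=m$ there is no integer $j$ with $m<j<n/2$, so the implication in the statement is vacuous at $i=m$. For $1\le i<m$, the implication holds automatically if its first premise fails, that is, if $d_i>i$. Suppose $\delta(G)\ge m$. Then for $i\le m-1$ we get $d_i\ge m>i$, so every case is vacuous. Hence it suffices to find, for each $n\ge 10$, a Hamiltonian $1$-tough graph $G$ on $n$ vertices with $\delta(G)\ge\lceil n/2\rceil-1$ that is neither bipartite nor pancyclic.

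The required minimum degree is $(n-1)/2$ for odd $n$ and $n/2-1$ for even $n$. This sits just below the Dirac/Bondy threshold $\delta\ge n/2$, above which the only non-pancyclic Hamiltonian graph is $K_{n/2,n/2}$. So the examples must live exactly at this boundary, and this is the gap in the conjecture that Theorem~\ref{thm_hoang-tough} closes with the midpoint inequality.

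\textbf{Which cycle length to kill.} Triangles cannot be the missing length. A triangle-free graph with $\delta>2n/5$ is bipartite, and for $n\ge 10$ we have $\lceil n/2\rceil-1>2n/5$ except in small cases, which I would check directly. Very dense graphs also easily contain all short cycles. So I would aim to kill a long length, most naturally $n-1$, using a near-bipartite parity structure.

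I would start from a complete bipartite graph on parts $X$ and $Y$ of sizes about $n/2$, add a small gadget of one or two vertices, and choose its attachments so that every odd cycle is forced through a fixed short set of "odd" edges. The point is that any cycle of length $n-1$ must then omit exactly one vertex, and a parity count across $X$ and $Y$ should show that no such omission yields a cycle.

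My first warm-up, $K_{k,k}$ plus a vertex $w$ joined to $k-1$ vertices of $X$ and one vertex of $Y$, turns out to be pancyclic. So the gadget must be arranged so that odd cycles have lengths in a restricted window, or so that the part sizes become unbalanced once any single vertex is deleted. Separate constructions for $n$ even and $n$ odd are to be expected.

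\textbf{Verification.} For a chosen family I would check the following.
\begin{enumerate}[(i)]
\item The minimum degree bound, by direct counting.
\item Hamiltonicity, by exhibiting the cycle explicitly.
\item $1$-toughness, which also follows from (ii) because every Hamiltonian graph is $1$-tough.
\item Non-bipartiteness, by exhibiting an odd cycle.
\item The absence of a cycle of the chosen length.
\end{enumerate}

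The main obstacle is (v) together with the design of the gadget itself. One has to prove that no cycle of the forbidden length exists while keeping every vertex of degree at least $\lceil n/2\rceil-1$, and these pull in opposite directions. I would first search small cases, say $n=10$ and $n=11$, by hand or computer to find the pattern, and then generalize it and prove (v) by the parity and part-size counting described above.
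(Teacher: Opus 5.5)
There is a genuine gap, and it is in your first step. Your reduction to $\delta(G)\ge\lceil n/2\rceil-1$ is logically valid, but it throws away exactly the slack a counterexample needs. The class you reduce to contains no suitable graphs once $n\ge 11$.

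\begin{itemize}
\item \textbf{Odd $n$.} Here $\delta(G)\ge (n-1)/2$ gives $e(G)\ge n(n-1)/4>(n-1)^2/4+1$. By the theorem of H\"aggkvist, Faudree and Schelp, every Hamiltonian non-bipartite graph with more than $(n-1)^2/4+1$ edges is pancyclic.
\item \textbf{Even $n\ge 12$.} The edge count falls just short of that bound, but minimum-degree results close the gap. Since $\delta\ge n/2-1\ge (n+2)/3$, a non-bipartite graph is weakly pancyclic with girth $3$ or $4$ (Brandt, Faudree and Goddard). Since $\delta>2n/5$, a non-bipartite graph contains a triangle (Andr\'asfai, Erd\H{o}s and S\'os).
\end{itemize}

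So no gadget, however clever, gives a Hamiltonian graph in your class that is neither bipartite nor pancyclic, and your planned search for a missing length $n-1$ cannot succeed. Only $n=10$ survives. There the graph obtained from $C_5$ by replacing each vertex with an independent pair is $4$-regular, Hamiltonian, triangle-free and non-bipartite, so even there the missing length is $3$. Separately, the proposal never exhibits a graph. The design of the gadget and the proof of (v), which are the whole content of the theorem, are deferred to a future search.

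What the hypothesis actually forces is only the staircase $d_i>i$ for $i<\lceil n/2\rceil-1$. This allows a couple of vertices of fairly small degree. The paper uses this slack as follows.

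\begin{itemize}
\item \textbf{Construction.} Start from the complete bipartite graph with parts $B$, where $|B|=\lfloor n/2\rfloor$, and $Z\cup\{x,y\}$. Delete edges so that $x$ and $y$ see complementary halves $B_1,B_2$ of $B$, then add the edge $xy$.
\item \textbf{Degrees.} The vertices $x$ and $y$ have degree about $n/4+1$, which is still at least $4$. Every other vertex has degree at least $\lceil n/2\rceil-1$. Hence $h=\lceil n/2\rceil-1$ is the only index with $d_h\le h$ and $d_{n-h+1}<n-h$, and the degree-sum requirement there is vacuous.
\item \textbf{Cycle structure.} The graph $G-xy$ is bipartite and $x,y$ have no common neighbour, so $G$ is triangle-free. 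On the other hand $xb_1z_1b_2yx$ is a $5$-cycle, so $G$ is not bipartite.
\item \textbf{Hamiltonicity.} An explicit Hamilton cycle is given, which also yields $1$-toughness.
\end{itemize}

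The missing length is therefore $3$, precisely the one you ruled out; your triangle argument is valid only inside your over-restricted class. This graph has exactly $\lfloor (n-1)^2/4\rfloor+1$ edges, which is the H\"aggkvist--Faudree--Schelp extremal value. That is a good indication of why the low-degree vertices are unavoidable.
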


Shan and Tanyel~\cite[Theorem~4]{shan2026strengthening} proved that, for
every integer $t\ge 4$, a $t$-tough graph satisfying the degree
sequence condition in Conjecture~\ref{conj_hoang} is Hamiltonian.
Thus the Hamiltonicity assumption in the conjecture is redundant
for $t\ge 4$. Our second result confirms the conjecture for
$t\ge 7$ and can therefore be stated without assuming
Hamiltonicity. Since every bipartite graph on at least three
vertices has toughness at most $1$, the bipartite alternative
does not occur in this range.

\begin{theorem}
	\label{thm_main-theorem}
	Let $t\ge 7$ be an integer and $G$ be a $t$-tough graph on
	$n\ge 3$ vertices with degree sequence $d_1,d_2,\ldots,d_n$.
	Suppose that, for every integer $i$ with $t\le i<n/2$,
	$d_i\le i$ and $d_{n-i+t}<n-i$ imply $d_j+d_{n-j+t}\ge n$
	for all integers $j$ with $i<j<n/2$. Then $G$ is pancyclic.
\end{theorem}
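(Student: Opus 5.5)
We combine the Hamiltonicity result of Shan and Tanyel with a Bondy-type pancyclicity argument. Since $t\ge 7\ge 4$, \cite[Theorem~4]{shan2026strengthening} shows that $G$ is Hamiltonian. If the hypothesis of Theorem~\ref{thm_hoang-tough} also holds, we are done, because a $t$-tough graph with $t\ge 7$ is not bipartite. Otherwise some $i$ with $t\le i<n/2$ satisfies $d_i\le i$ and $d_{n-i+t}<n-i$, yet $d_j+d_{n-j+t}\ge n$ fails for some $j$ with $i<j\le\lceil n/2\rceil$. The conjectured condition covers all $j$ with $i<j<n/2$, so the only possible failure is at $j=\lceil n/2\rceil$. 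Hence $n$ is odd (for $n$ even, $\lceil n/2\rceil=n/2$ and the two conditions coincide), and for $m=(n+1)/2$ we have $d_m+d_{m-1+t}\le n-1$. So the remaining case is $n$ odd with this midpoint deficiency, and we must prove pancyclicity there directly.

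In this case we first extract structure from toughness and degrees. The key fact is that $t$-toughness forces $\delta(G)\ge 2t\ge 14$. Also, any independent set $I$ satisfies $|N(I)|\ge t\,|I|$ roughly, since removing $V\setminus I$ isolates $I$; this gives $\alpha(G)\le n/(t+1)$. A small independence number relative to $n$ is the key lever. A Hamiltonian graph with $\alpha(G)$ small and minimum degree large is pancyclic. Concretely, we would take a Hamiltonian cycle $C=v_1\dots v_nv_1$ and use the standard Bondy/Hakimi–Schmeichel argument. If $C$ has no chord pattern producing a cycle of length $\ell$, then the neighborhoods of consecutive vertices along $C$ are constrained. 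This yields a degree-sum bound of the form $d(v_i)+d(v_{i+1})\le n$, or a near-bipartite structure.

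For short and long cycles separately we would proceed as follows. For lengths $\ell\ge n-\text{const}$ we use the Hamiltonian cycle together with chords from high-degree vertices. For the midpoint situation, the vertices of degree at least $d_{m}$ form a set of about $n/2$ vertices. Meanwhile $d_{m-1+t}\le n-1-d_m$ bounds many degrees, which should give a large set of vertices with degree below $n/2$. Toughness then rules out these low-degree vertices being mostly independent, since $\alpha\le n/8$. For small $\ell$ (triangles and upward) we would use $\delta\ge 14$ and an edge-count argument: $|E(G)|\ge n\delta/2$, plus the Bondy theorem that a Hamiltonian graph with $|E|\ge n^2/4$ is pancyclic or $K_{n/2,n/2}$. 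We would aim to show that the degree sequence in the residual case gives $\sum d_k\ge n^2/2$. The large-index degrees are bounded below because $d_{n-i+t}<n-i$ fails for the appropriate $i$ near $n/2$, giving about $n/2$ vertices of degree at least $(n+1)/2$. Combined with $\delta\ge 2t$, the edge count gets close to $n^2/4$.

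The main obstacle is the last step: turning the midpoint deficiency plus the remaining inequalities into an edge-count or chord argument strong enough to reach Bondy's threshold. The counterexample for $t=1$ shows the edge count can fall short. So we expect to need toughness substantively, via $\alpha(G)\le n/(t+1)$ and a refined Hamiltonian-cycle chord count (as in Schmeichel–Hakimi) that exploits $t\ge 7$. We would first try the following: assume a missing cycle length $\ell$, count the forbidden chord pairs $(v_iv_{j},v_{i+1}v_{j+\ell-1})$ to bound $d(v_i)+d(v_{i+1})$, and then derive a contradiction with the lower bounds on $d_{\lceil n/2\rceil}$-type vertices and the independence bound.
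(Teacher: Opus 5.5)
Your opening reduction via Theorem~\ref{thm_hoang-tough} is valid, but after it you only have a plan, and the tools you propose for the residual case cannot close it.

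\emph{The parity step is false.} For even $n$ the hypothesis requires only $j<n/2$, i.e.\ $j\le n/2-1$. Theorem~\ref{thm_hoang-tough} requires $j\le\lceil n/2\rceil=n/2$. So the midpoint inequality is dropped for both parities, which is why Theorem~\ref{thm_counterexample} gives counterexamples of every order $n\ge 10$.

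\emph{A residual-case example defeats your tools.} Take $t=7$, $n\equiv 2\pmod 4$ with $n\ge 30$, and $k=(n-2)/4\ge 7$. Let $G$ be the $k$-th power of the $n$-cycle: vertex set $\mathbb{Z}_n$, two vertices adjacent when their cyclic distance is at most $k$.
\begin{enumerate}
\item It is $7$-tough. If $G-S$ has $c\ge 2$ components, then going around the cycle, each of the at least $c$ places where the component changes is a run of at least $k$ deleted vertices. Hence $|S|\ge kc\ge 7c$.
\item It satisfies the hypothesis. $G$ is $(n/2-1)$-regular, so $h=n/2-1$ is the only index $i$ with $t\le i<n/2$ and $d_i\le i$, and the hypothesis holds vacuously.
\item It lies in your residual case, since $d_{n/2}+d_{n/2+t}=n-2<n$.
\end{enumerate}
Yet each of your steps fails on it:
\begin{enumerate}
\item $n$ is even.
\item $|E(G)|=n^2/4-n/2$, so Bondy's $n^2/4$ edge theorem is unavailable.
\item No vertex has degree at least $(n+1)/2$, contrary to your claim that about $n/2$ vertices do.
\item Every two consecutive vertices on every Hamilton cycle have degree sum $n-2$. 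So the bound $d(v_i)+d(v_{i+1})\le n$ that your chord count would produce is never contradicted.
\end{enumerate}
Finally, ``Hamiltonian with small $\alpha$ and large $\delta$ implies pancyclic'' is not a result you can invoke when $\alpha(G)$ may be as large as $n/(t+1)$.

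What is missing is the mechanism the paper relies on. It reduces, via Lemma~\ref{lem_hoang-no-bad-index}, to the existence of an index $h$ with $d_h\le h$ and $d_{n-h+t}<n-h$, and shows $h$ is unique (Claim~\ref{claim_uniqueness-of-h}). Uniqueness gives $d_{n-k+t}\ge n-k$ for every other $k$ with $d_k\le k$. This yields many vertices of large degree when $h<p$ or when the largest such $k$ satisfies $k\ge n/2-t$. Those cases follow from Lemmas~\ref{lem_hoang-R-half} and~\ref{lem_hoang-R-third}. For $n\le 12t$ they follow instead from Hamiltonian-connectedness (Lemma~\ref{lem_chvatal-hamiltonian-connected}, using $\alpha\le n/(t+1)<2t-1\le\kappa-1$) together with Lemmas~\ref{lem_bondy-hamilton-cycle} and~\ref{lem_faudree-hamilton-path}.

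The remaining cases, including the example above, use the toughness closure (Lemma~\ref{lem_shan-toughness-closure}) lifted to $s$-Hamiltonicity (Lemma~\ref{lem_s-hamiltonian-closure}).
\begin{enumerate}
\item In Case~2.1, $C_{n-t+5}(G)$ is complete, so $G$ is $3$-Hamiltonian.
\item In Case~2.3, a clique-plus-attachment argument using Lemma~\ref{lem_generalized-marriage} shows that $C_{n-t+2}(G)$, and hence $G$, is $1$-Hamiltonian.
\end{enumerate}
A single vertex $x$ of degree above $(n-s)/2$ then yields every cycle length by Lemma~\ref{lem_s-hamiltonian}. Deleting $x$ and up to $s-1$ of its nonneighbors still leaves a Hamilton cycle, and $x$ is adjacent to more than half of its vertices. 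This freedom to choose the Hamilton cycle after deletions replaces the consecutive-pair degree-sum condition that your Bondy-type argument needs and cannot obtain here.
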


\section{Preliminaries}
\label{sec_preliminaries}

We first state the Hamiltonicity result of Shan and Tanyel that
will be used in the proof of Theorem~\ref{thm_main-theorem}.

\begin{theorem}[Shan and Tanyel~{\cite[Theorem~4]{shan2026strengthening}}]
\label{thm_shan-tanyel-hamiltonicity}
Let $t\ge4$ be an integer and $G$ be a $t$-tough graph on
$n\ge3$ vertices with degree sequence $d_1,d_2,\ldots,d_n$.
Suppose that, for every integer $i$ with $t\le i<n/2$,
$d_i\le i$ and $d_{n-i+t}<n-i$ imply
$d_j+d_{n-j+t}\ge n$ for all integers $j$ with $i<j<n/2$.
Then $G$ is Hamiltonian.
\end{theorem}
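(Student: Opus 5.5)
The plan is a closure-and-counting argument in the style of Chv\'atal's proof of Theorem~\ref{thm_chvatal-deg-seq}, with toughness used to rule out the one configuration that the weakened hypothesis (no degree-sum requirement at $j=\lceil n/2\rceil$) no longer excludes. Suppose $G$ is not Hamiltonian. Since $G$ is $t$-tough and $t\ge 4$, every vertex has degree at least $2t\ge 8$. Let $H$ be the Bondy--Chv\'atal closure of $G$, obtained by repeatedly joining nonadjacent vertices with degree sum at least $n$. Then $H$ is non-Hamiltonian and not complete. Since $H$ is a spanning supergraph of $G$, it is still $t$-tough, and each degree $d_k(H)$ is at least $d_k(G)$.

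\textbf{Locating a bad index.} Choose nonadjacent $u,v$ in $H$ with $d_H(u)+d_H(v)$ maximum and $d_H(u)\le d_H(v)$, and set $i=d_H(u)$. Take a Hamiltonian $(u,v)$-path in $H+uv$. The usual crossing-edge argument then gives the following.
\begin{itemize}
\item[(a)] There are at least $i$ vertices nonadjacent to $v$, each of degree at most $d_H(u)=i$, so $d_i(G)\le d_i(H)\le i$.
\item[(b)] There are at least $n-i-1$ vertices nonadjacent to $u$ (together with $u$ itself), each of degree at most $d_H(v)\le n-1-i$. This should give $d_{n-i}(G)<n-i$.
\end{itemize}
Since $u$ and $v$ are nonadjacent in the closure, $i+d_H(v)\le n-1$, so $i<n/2$. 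Also $i\ge 2t\ge t$, so $i$ lies in the range covered by the hypothesis.

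\textbf{Using the hypothesis.} The hypothesis is phrased with the shifted index $n-i+t$, so I would first strengthen (b) to $d_{n-i+t}<n-i$. The idea is that if more than $t-1$ of the top vertices had degree at least $n-i$, then the set $S=N_H(u)$ would split off too many components. To see this, use that the non-neighbours of $u$ along the Hamiltonian path pair up with the neighbours of $v$. The bound $|S|\ge t\,c(H-S)$ should then cap the number of high-degree vertices at $t-1$.

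Once both premises hold, the hypothesis gives $d_j+d_{n-j+t}\ge n$ for all $i<j<n/2$. Now compare with Chv\'atal's counting. The vertices nonadjacent to $u$, together with the neighbourhood structure of $v$, should produce some $j$ with $i<j\le\lceil n/2\rceil$ and $d_j+d_{n-j+t}<n$, by the standard iteration in Ho\`ang's proof of Theorem~\ref{thm_hoang-tough}. If that $j$ satisfies $j<n/2$, we have a contradiction.

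\textbf{Main obstacle: the midpoint case.} The only case left is $j=\lceil n/2\rceil$, where in particular $i=\lceil n/2\rceil-1$. Here the degree condition is silent, and toughness must do all the work. This is also exactly where the counterexamples of Theorem~\ref{thm_counterexample} live when $t=1$.

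In this case $u$ has degree about $n/2-1$, and there are roughly $n/2$ vertices of degree less than $n/2$. I would take a set $S$ of size about $n/2$, chosen as $N_H(u)$ or as the high-degree side. The Hamiltonian-path structure of a maximal non-Hamiltonian graph forces the low-degree vertices to be nearly independent from each other. So $H-S$ should have at least about $|S|/3$ components, which contradicts $t$-toughness once $t\ge 4$.

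The delicate part is proving this near-independence precisely. One has to handle vertices whose degree is exactly $\lceil n/2\rceil$, and the off-by-$t$ shift in the index $n-j+t$. This is where the constant $4$ should come from, and where I expect most of the case analysis.
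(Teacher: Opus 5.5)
The paper does not prove this statement; it quotes it from Shan and Tanyel. Your proposal therefore has to stand on its own, and it has a genuine gap at its central step.

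\textbf{The missing step.} In your framework the shift from $n-i$ to $n-i+t$ can only come from the ``strengthening of (b)'', and the mechanism you propose cannot supply it. The target is also misstated. The condition $d_{n-i+t}<n-i$ means that at most $i-t$ vertices have degree at least $n-i$, not $t-1$. The crossing argument already puts all such vertices inside $N_H(u)$, so what you actually need is $t$ neighbours of $u$ of degree less than $n-i$.

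Toughness applied to $S=N_H(u)$ cannot give this. The graph $H-S$ consists of the isolated vertex $u$ together with the $n-1-i$ non-neighbours of $u$. The crossing argument ($w_j\in N_H(v)\Rightarrow w_{j+1}\notin N_H(u)$) only injects $N_H(v)$ into that set and says nothing about edges among the non-neighbours of $u$. They may form a single component. Then $|S|\ge t\,c(H-S)$ returns only $i\ge 2t$, which you already know and which says nothing about the degrees of vertices in $S$. This upgrade is exactly what separates Ho\`ang's $t$-tough condition from Chv\'atal's, and the classical $n$-closure/crossing argument gives you no handle on it.

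\textbf{What produces the shift.} The tool that does this is the closure lemma, Lemma~\ref{lem_shan-toughness-closure}, and this is where $t\ge 4$ genuinely enters. Close at threshold $n-t$ instead of $n$. Take a nonedge $uv$ of $C_{n-t}(G)$ with maximum degree sum and $d(u)\le d(v)$; then $d(u)+d(v)\le n-t-1$.
\begin{itemize}
\item $v$ has at least $d(u)+t$ non-neighbours, all of degree at most $d(u)$.
\item $u$ together with its non-neighbours gives $n-d(u)$ vertices of degree at most $n-t-1-d(u)$.
\end{itemize}
Hence $r=d(u)+t$ satisfies $d_r\le d(u)<r$ and $d_{n-r+t}<n-r$ (when $r<n/2$), without any Hamilton path. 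The same computation drives Claim~\ref{claim_clique-neighborhoods}. The rest of a proof must then exploit the degree-sum condition above the bad index and the structure of the closure, not a component count on $N_H(u)$.

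\textbf{Further problems.}
\begin{itemize}
\item The $n$-closure $H$ need not be edge-maximal non-Hamiltonian, so $H+uv$ need not be Hamiltonian and your $(u,v)$-path is not guaranteed. You should pass to an edge-maximal non-Hamiltonian supergraph instead; this is legitimate because both the degree hypothesis and toughness are preserved by adding edges.
\item Theorem~\ref{thm_hoang-tough} assumes $G$ is Hamiltonian, so its proof contains no Hamiltonicity iteration you can borrow to produce the index $j$.
\item Your midpoint case is only a heuristic. Nothing in maximal non-Hamiltonicity forces the low-degree vertices to be nearly independent, or forces $H-S$ to have about $|S|/3$ components.
\end{itemize}
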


Let $G$ be a graph. For $S\subseteq V(G)$, let $G[S]$ be the
subgraph of $G$ induced by $S$, and let $G-S=G[V(G)\setminus S]$.
We write $G-v$ for $G-\{v\}$. Let $N_G(v)$ denote the neighborhood
of $v$ in $G$, and let $N_G(S)=(\bigcup_{v\in S}N_G(v))\setminus S$.
As with degrees, we omit the subscript when the graph is clear from
the context. The independence number, vertex connectivity, and maximum
degree of $G$ are denoted by $\alpha(G)$, $\kappa(G)$, and $\Delta(G)$,
respectively. We use the conventions that $\kappa(G)=0$ when $G$ is
disconnected and $\kappa(K_n)=n-1$. For integers $p$ and $q$, let
$[p,q]=\{i\in\mathbb Z:p\le i\le q\}$.

We first record some consequences of toughness. The degree and
connectivity bounds will be used to obtain Hamilton paths with
prescribed endpoints, while the last inequality allows us to apply
a closure operation after deleting vertices.

\begin{lemma}
	\label{lem_toughness-observation}
	Let $t\ge 0$ be a real number and $G$ be a $t$-tough graph on $n$
	vertices. If $G$ is not complete, then
	$\delta(G)\ge\kappa(G)\ge 2t$ and $\alpha(G)\le n/(t+1)$.
	Moreover, $\tau(G-S)\ge t-|S|/2$ for every $S\subseteq V(G)$.
\end{lemma}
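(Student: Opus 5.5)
The plan is to prove the three assertions of Lemma~\ref{lem_toughness-observation} separately, each directly from the definition of $t$-toughness applied to a suitable separating set.

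\emph{Connectivity and degree.} Since $G$ is not complete, it has a minimum vertex cut $S$ with $|S|=\kappa(G)$ and $c(G-S)\ge 2$. Toughness then gives $\kappa(G)=|S|\ge t\,c(G-S)\ge 2t$. The inequality $\delta(G)\ge\kappa(G)$ is standard. If $d(v)=\delta(G)$ and $v$ is not adjacent to some vertex $u$, then $N(v)$ separates $v$ from $u$, so $\kappa(G)\le\delta(G)$. Such a $u$ exists: otherwise $v$ is adjacent to every other vertex, and then $\delta(G)=n-1\ge\kappa(G)$ holds trivially.

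\emph{Independence number.} Let $I$ be a maximum independent set and put $S=V(G)\setminus I$. If $|I|\le 1$, the bound $\alpha(G)\le n/(t+1)$ is immediate when $n\ge t+1$. This case needs some care for small $n$, but a noncomplete $t$-tough graph has $n\ge 2t+2>t+1$ by the connectivity bound, so it is fine. If $|I|\ge 2$, then $G-S$ consists of $|I|$ isolated vertices, so $n-|I|\ge t|I|$, which rearranges to $|I|\le n/(t+1)$.

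\emph{Toughness after deletion.} Fix $S\subseteq V(G)$ and let $H=G-S$. If $H$ is complete, then $\tau(H)=\infty$ and there is nothing to prove. Otherwise, take $T\subseteq V(H)$ with $c(H-T)\ge 2$. Since $H-T=G-(S\cup T)$, toughness of $G$ gives $|S|+|T|\ge t\,c(H-T)$. Using $c(H-T)\ge 2$, we get $|S|\le \tfrac{|S|}{2}c(H-T)$, and therefore $|T|\ge (t-|S|/2)\,c(H-T)$. Hence $H$ is $(t-|S|/2)$-tough.

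There is one degenerate point to check: if $t-|S|/2<0$, the statement should be read with toughness being at least a negative number, which holds vacuously.

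The main subtlety is not any single inequality but these boundary conventions: complete subgraphs, disconnected $H$ (which is allowed, since then $T=\emptyset$ works and yields $0\ge (t-|S|/2)c$ only when $|S|\ge 2t$, consistent with the inequality above), and tiny independent sets. I will handle these explicitly as above.
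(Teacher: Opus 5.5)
Your proof is correct and follows the paper's argument essentially line for line. A minimum cut gives $\kappa(G)\ge 2t$, the complement of a maximum independent set gives $n-\alpha(G)\ge t\,\alpha(G)$, and for $G-S$ you bound $|T|\ge t\,c-|S|\ge (t-|S|/2)\,c$ using $c\ge 2$, exactly as the paper does. The only difference is your extra boundary bookkeeping, e.g.\ the case $|I|\le 1$; this case is vacuous, since a noncomplete graph has two nonadjacent vertices and so $\alpha(G)\ge 2$, which is how the paper dispatches it.
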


\begin{proof}
	Suppose first that $G$ is not complete. A minimum vertex cut leaves
	at least two components, so $\kappa(G)\ge 2t$. The inequality
	$\delta(G)\ge\kappa(G)$ is immediate. If $W$ is a maximum independent
	set, then $|W|\ge 2$ and $c(G-(V(G)\setminus W))=|W|$. Thus
	$n-|W|\ge t|W|$, giving $\alpha(G)\le n/(t+1)$.
	
	Now let $S\subseteq V(G)$. There is nothing to prove if $G-S$ is
	complete. Otherwise, for every $W\subseteq V(G)\setminus S$ with
	$c=c(G-(S\cup W))\ge 2$, toughness gives
	\[
	|W|\ge tc-|S|\ge \left(t-\frac{|S|}{2}\right)c.
	\]
	Taking the minimum over all such $W$ proves the last assertion.
\end{proof}

For a positive integer $s$, the \emph{$s$-closure} of $G$, denoted by
$C_s(G)$, is obtained by repeatedly adding an edge between two
nonadjacent vertices whose degree sum in the current graph is at
least $s$, until no such pair remains. In particular, if $H=C_s(G)$
and $xy\notin E(H)$, then $d_H(x)+d_H(y)<s$. We will use the following
closure lemma of Shan and Tanyel.

\begin{lemma}[{Shan and Tanyel~\cite[Theorem~5]{shan2025}}]
	\label{lem_shan-toughness-closure}
	Let $t\ge 4$ be a rational number and $G$ be a $t$-tough graph on
	$n\ge 3$ vertices. If $x$ and $y$ are nonadjacent vertices of $G$
	with $d_G(x)+d_G(y)\ge n-t$, then $G$ is Hamiltonian if and only if
	$G+xy$ is Hamiltonian.
\end{lemma}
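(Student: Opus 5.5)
This lemma is cited from Shan and Tanyel, so the paper does not prove it; what follows is how I would prove it from scratch. The implication ``$G$ Hamiltonian $\Rightarrow$ $G+xy$ Hamiltonian'' is trivial. For the converse I argue by contradiction in the spirit of Bondy--Chv\'atal closure. Suppose $G+xy$ is Hamiltonian but $G$ is not. Then every Hamilton cycle of $G+xy$ uses $xy$, so $G$ has a Hamilton path $P=v_1v_2\cdots v_n$ with $v_1=x$ and $v_n=y$.

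The first step is the classical crossing argument. Let
\[
A=\{i: v_{i+1}\in N(x)\},\qquad B=\{i: v_i\in N(y)\}.
\]
If $A\cap B\neq\emptyset$, we get a Hamilton cycle of $G$. Hence $|N(x)|+|N(y)|\le n-1$. Our hypothesis is only $d(x)+d(y)\ge n-t$, so this alone gives no contradiction; the slack of about $t-1$ must be absorbed by toughness.

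The second step uses the crossing structure to build a separating set. Define the successor set
\[
S^+=\{v_{i+1}: v_i\in N(y)\}\cup\{\ldots\}
\]
in the usual way, and consider $X=N(x)^-\cup\{y\}$, the predecessors of neighbors of $x$ together with $y$. In a non-Hamiltonian configuration, $X$ is typically an independent set, and this is where the argument must be sharpened. For $t$-tough graphs the standard tool is to show that, after removing the set $S=N(x)\cup N(y)$ (or a modification of it), the remaining vertices of the path split into many components. Each ``gap'' between consecutive neighbors along $P$ that contains no crossing yields a component.

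Concretely, I would choose a Hamilton path $P$ from $x$ to $y$ that is extremal in a suitable sense, for instance maximizing the number of crossings of a certain type. I would then count the components of $G-S$ for a set $S$ of size roughly $d(x)+d(y)-(\text{overlap})$. Toughness gives $|S|\ge t\cdot c(G-S)$. Meanwhile, $d(x)+d(y)\ge n-t$ forces $c(G-S)$ to be large relative to $|S|$, since the complement of $S$ has size at most about $t$ plus the number of gaps. Since Lemma~\ref{lem_toughness-observation} gives $\alpha(G)\le n/(t+1)$ and $\delta\ge 2t$, both degrees are at least $2t$. Each of $x,y$ then has many neighbors on $P$, which produces many disjoint intervals.

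The main obstacle is the counting in this last step. We must show that the number of intervals free of crossings, and hence the number of components after deletion, exceeds $|S|/t$, using only a deficit of $t-1$ in the degree sum. Here $t\ge4$ is needed, and one likely has to exploit non-edges between interval endpoints by iterating crossing arguments, Jackson-style, to show these endpoints are pairwise nonadjacent. I would first try taking $S$ to be the set of neighbors of $x$ and $y$ on $P$ together with their successors. I would then bound the number of components by $\ge (d(x)+d(y)-t)/2$ and compare with $|S|\le 2(n-d(x)-d(y))+\ldots$, adjusting through the extremal choice of $P$ until the inequality $|S|<t\,c(G-S)$ appears.
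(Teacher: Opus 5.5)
There is a genuine gap: the entire toughness step is missing. The paper does not prove this lemma (it is quoted from Shan and Tanyel), so your proposal has to stand on its own, and it does not. The step you actually carry out is correct but is only the Bondy--Chv\'atal starting point. Your $A\cap B=\emptyset$ says $N(x)\cap N(y)^+=\emptyset$ on $P$, so $W=\{v_2,\ldots,v_n\}\setminus(N(x)\cup N(y)^+)$ has $|W|=n-1-d(x)-d(y)\le t-1$. Turning this defect into a set $S$ with $|S|<t\,c(G-S)$ is the whole content of the cited theorem, and at that point your text becomes a plan rather than an argument. The successor set is left as ``$\{\ldots\}$ in the usual way'', and the extremal choice of $P$ is never specified. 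The needed non-adjacencies are only hoped for ``Jackson-style'', and the conclusion is reached by ``adjusting until the inequality appears''. You also never show where $t\ge 4$ enters, which is a sign the decisive argument is absent.

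The concrete ingredients you do name would not work. First, $N(x)^-\cup\{y\}$ need not be independent. We always have $v_2\in N(x)$, so if also $xv_3\in E(G)$, then $v_1,v_2\in N(x)^-$ are adjacent along $P$. The crossing argument only shows that $y$ has no neighbor in $N(x)^-$. Second, the cut you propose, $S=N(x)\cup N(y)\cup N(x)^+\cup N(y)^+$, can never violate toughness. It contains the disjoint sets $N(x)$ and $N(y)^+$, so $|S|\ge d(x)+d(y)\ge n-t$, while $V(G)\setminus S\subseteq\{x\}\cup W$ has at most $t$ vertices. Hence $c(G-S)\le t$, and $|S|<t\,c(G-S)$ would force $n<t^2+t$, which nothing in the hypotheses provides. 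This also shows that your estimates $c(G-S)\ge(d(x)+d(y)-t)/2$ and $|S|\le 2(n-d(x)-d(y))+\cdots$ cannot hold for this $S$.

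More generally, the segments of $P$ between consecutive neighbors of $x$ or $y$ are separate components of $G-S$ only if chords between different segments are excluded. A single crossing argument on one path gives no such control. What is missing is a mechanism that forces many non-adjacencies. One example would be rerouting $P$ to get many alternative Hamilton paths ending at $y$, each of whose other endpoints inherits the crossing restrictions. This must be combined with a cut whose size is at most about $t$ times the number of pieces it isolates. Without such an argument the proposal does not prove the lemma.
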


Let $s\ge 0$ be an integer. A graph $G$ on at least $s+3$ vertices
is \emph{$s$-Hamiltonian} if $G-S$ is Hamiltonian for every
$S\subseteq V(G)$ with $|S|\le s$. The next lemma extends the
preceding closure result to $s$-Hamiltonicity. Its cases $s=1$ and
$s=3$ will both be used in the proof of Theorem~\ref{thm_main-theorem}.

\begin{lemma}
	\label{lem_s-hamiltonian-closure}
	Let $s\ge 0$ be an integer, let $t\ge 4+s/2$ be a rational number,
	and let $G$ be a $t$-tough graph on $n\ge s+3$ vertices. If
	$a$ is a positive integer with $a\ge n-t+3s/2$, then $G$ is $s$-Hamiltonian if and only if $C_a(G)$
	is $s$-Hamiltonian. In particular, if $C_a(G)$ is complete, then
	$G$ is $s$-Hamiltonian.
\end{lemma}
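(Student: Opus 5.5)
Since $C_a(G)$ is obtained from $G$ by adding edges, one direction is trivial: if $G$ is $s$-Hamiltonian, so is every spanning supergraph of $G$ on the same vertex set, and in particular $C_a(G)$. For the converse it suffices, by induction along the closure sequence $G=G_0\subseteq G_1\subseteq\cdots\subseteq G_m=C_a(G)$, to prove a single step. Suppose $x,y$ are nonadjacent in a graph $G'$ with $G\subseteq G'\subseteq C_a(G)$ and $d_{G'}(x)+d_{G'}(y)\ge a$. We must show that if $G'+xy$ is $s$-Hamiltonian then so is $G'$. Each $G_k$ is a spanning supergraph of $G$, so it is still $t$-tough; the toughness hypothesis is therefore inherited along the sequence. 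The final claim follows because a complete graph on $n\ge s+3$ vertices is $s$-Hamiltonian.

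For the single step, fix $S\subseteq V(G')$ with $|S|\le s$. We must show $G'-S$ is Hamiltonian, knowing that $(G'+xy)-S$ is. If $x\in S$ or $y\in S$, then $(G'+xy)-S=G'-S$ and there is nothing to prove. Otherwise set $H=G'-S$, which has $n'=n-|S|\ge 3$ vertices. By Lemma~\ref{lem_toughness-observation}, $\tau(H)\ge t-|S|/2\ge t-s/2\ge 4$, so $H$ is $t'$-tough for the rational number $t'=t-s/2\ge 4$. Deleting $S$ removes at most $|S|$ neighbours from each of $x$ and $y$, so
\[
d_H(x)+d_H(y)\ge a-2|S|\ge n-t+\tfrac{3s}{2}-2|S|.
\]
We want this to be at least $n'-t'=n-|S|-t+s/2$. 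The required inequality reduces to $\tfrac{3s}{2}-2|S|\ge \tfrac{s}{2}-|S|$, that is, $|S|\le s$, which holds. Hence Lemma~\ref{lem_shan-toughness-closure} applies to $H$ with the nonadjacent pair $x,y$. Since $H+xy=(G'+xy)-S$ is Hamiltonian, $H$ is Hamiltonian.

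The main point needing care is the toughness bookkeeping. Lemma~\ref{lem_toughness-observation} is stated for non-complete $G$; when $H$ is complete, $\tau(H)=\infty$, and $H$ is trivially Hamiltonian since $n'\ge 3$. We also need $H$ itself to be $t'$-tough, not merely $G$, which is exactly what the last assertion of Lemma~\ref{lem_toughness-observation} gives. The constant $3s/2$ in $a$ is precisely what absorbs both the degree loss $2|S|$ and the toughness loss $|S|/2$, so the arithmetic closes with equality when $|S|=s$. Finally, the closure is computed in $G$ and not in $G-S$, which is why the degree sum is measured in $G'$ before deletion. This is harmless, because the one-edge lemma is applied afresh to each $H$.
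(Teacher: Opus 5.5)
Your proof is correct and follows the paper's argument. Like the paper, you fix $S$ and use Lemma~\ref{lem_toughness-observation} to get toughness at least $t-|S|/2\ge 4$ for each graph in the closure sequence minus $S$. You then check that the degree sum $a-2|S|$ clears the threshold and apply Lemma~\ref{lem_shan-toughness-closure} one edge at a time. Your uniform parameter $t-s/2$ in place of $t-|S|/2$ is a harmless variation, since the required inequality still reduces to $|S|\le s$.
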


\begin{proof}
	Adding edges preserves $s$-Hamiltonicity, so we prove the converse.
	Let \[G=G_0,G_1,\ldots,G_m=C_a(G)\] be a sequence of graphs obtained
	in the closure operation. Fix $S\subseteq V(G)$ with $r=|S|\le s$.
	Since adding edges does not decrease toughness,
	Lemma~\ref{lem_toughness-observation} gives
	$\tau(G_i-S)\ge t-r/2\ge 4$ for every $i\in[0,m]$.
	
	Suppose that $G_{i+1}=G_i+xy$. If $x\in S$ or $y\in S$, then
	$G_{i+1}-S=G_i-S$. Otherwise,
	\[
	\begin{aligned}
		d_{G_i-S}(x)+d_{G_i-S}(y)
		&\ge a-2r\\
		&\ge n-t+\frac{3s}{2}-2r\\
		&\ge (n-r)-\left(t-\frac r2\right).
	\end{aligned}
	\]
	By Lemma~\ref{lem_shan-toughness-closure}, $G_i-S$ is Hamiltonian
	if and only if $G_{i+1}-S$ is Hamiltonian. Applying this at every
	step shows that $G-S$ is Hamiltonian whenever $C_a(G)-S$ is
	Hamiltonian. Since $S$ was arbitrary, the conclusion follows.
	The last assertion holds because a complete graph on at least
	$s+3$ vertices is $s$-Hamiltonian.
\end{proof}

For a $t$-tough graph $G$ on $n\ge 6$ vertices with $t\ge 7$,
Lemma~\ref{lem_s-hamiltonian-closure} has two consequences that we will use: if $C_{n-t+5}(G)$ is complete, then
$G$ is $3$-Hamiltonian; and if $C_{n-t+2}(G)$ is $1$-Hamiltonian,
then so is $G$.

We next recall results that turn Hamilton paths or Hamilton cycles
into cycles of all lengths. A graph is \emph{Hamiltonian-connected}
if every two distinct vertices are the endpoints of a Hamilton path.
The following theorem provides such paths through a comparison of
independence number and connectivity.

\begin{lemma}[{Chv\'atal and Erd\H{o}s~\cite[Theorem~3]{chvatal-erdos1972}}]
	\label{lem_chvatal-hamiltonian-connected}
	Let $G$ be a graph on at least three vertices. If
	$\alpha(G)\le\kappa(G)-1$, then $G$ is Hamiltonian-connected.
\end{lemma}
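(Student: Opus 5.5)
We argue directly, in the classical style of Chv\'atal and Erd\H{o}s, using a longest-path argument together with Menger's theorem (in its fan form). Note first that $\alpha(G)\ge 1$ forces $\kappa(G)\ge 2$, so $G$ is $2$-connected. Write $\kappa=\kappa(G)$. Fix distinct vertices $u,v$ and let $P$ be a longest $u$--$v$ path in $G$, which exists by connectivity. Orient $P$ from $u$ to $v$, and for $x\in V(P)\setminus\{v\}$ let $x^+$ denote its successor on $P$. Suppose for a contradiction that $P$ is not Hamiltonian. Then choose a vertex $h\in V(G)\setminus V(P)$.

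\textbf{Step 1 (a fan from $h$).} By the fan lemma (a consequence of Menger's theorem), since $G$ is $\kappa$-connected there are $\min\{\kappa,|V(P)|\}$ paths from $h$ to $V(P)$. These paths share only $h$, each meets $V(P)$ only in its final vertex, and their endpoints $x_1,\dots,x_k$ on $P$ are distinct. The basic exchange observation is as follows. If two of these endpoints are consecutive on $P$, say $x_j=x_i^+$, then replacing the edge $x_ix_i^+$ by the detour $x_i\to h\to x_i^+$ along the two fan paths yields a longer $u$--$v$ path. This contradicts the maximality of $P$. In particular, if $|V(P)|<\kappa$, then the fan reaches every vertex of $P$, and two consecutive vertices are hit; this is already a contradiction. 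So we may assume $k=\kappa$.

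\textbf{Step 2 (an independent set of size $\kappa$).} At most one endpoint, namely $v$, has no successor. Hence, after relabelling, $x_1,\dots,x_{\kappa-1}\ne v$, with $x_1,\dots,x_{\kappa-1}$ in order along $P$. Consider the set $I=\{h,x_1^+,\dots,x_{\kappa-1}^+\}$, which has $\kappa$ elements; we claim it is independent.

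First, $h$ is not adjacent to any $x_i^+$. Otherwise the edge $hx_i^+$ would be a further fan path ending at $x_i^+$, and $x_i^+$ would then be a neighbour of $h$ on $P$ consecutive to $x_i$; the detour of Step 1 applies and gives a contradiction. Here we must take care that the fan paths do not use $x_i^+$ internally; this holds because their interiors avoid $P$.

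Second, no two successors are adjacent. Suppose $x_i^+x_j^+\in E(G)$ with $i<j$. Then
\[
u\,P\,x_i\to(\text{fan to }h)\to(\text{fan to }x_j)\,\overleftarrow{P}\,x_i^+\to x_j^+\,P\,v
\]
is a $u$--$v$ path containing all of $V(P)$ together with $h$. It is therefore longer than $P$, a contradiction.

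Hence $\alpha(G)\ge|I|=\kappa$, which contradicts the hypothesis $\alpha(G)\le\kappa-1$. So $P$ is Hamiltonian, and since $u,v$ were arbitrary, $G$ is Hamiltonian-connected.

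\textbf{Main obstacle.} The delicate point is the bookkeeping in Step 2: we must ensure that the fan paths are internally disjoint from $P$ and from one another, that the $\kappa-1$ successors are distinct, and that the endpoint $v$ is the only attachment we are forced to discard. The ``$-1$'' in the hypothesis $\alpha\le\kappa-1$ is exactly what absorbs the possible loss of $v$. By contrast, in the Hamiltonian-cycle version of the theorem every attachment has a successor, and $\alpha\le\kappa$ suffices there.
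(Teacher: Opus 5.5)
Your proof is correct and is the standard Chv\'atal--Erd\H{o}s argument: take a longest $u$--$v$ path, run a $\kappa$-fan from an off-path vertex $h$, discard the one attachment that may be $v$, and show that $h$ together with the remaining $\kappa-1$ successors is independent. The paper gives no proof of its own and simply cites Chv\'atal and Erd\H{o}s for this result, so your write-up reconstructs the cited argument rather than taking a different route.
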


The next two results apply according to whether the endpoints of
a Hamilton path are adjacent.

\begin{lemma}[{Bondy~\cite{bondy1971}; Schmeichel and
	Hakimi~\cite[Lemma~1]{schmeichel-hakimi1988}}]
	\label{lem_bondy-hamilton-cycle}
	Let $G$ be a graph on $n\ge 3$ vertices with a Hamilton cycle
	$v_1v_2\cdots v_nv_1$. If $d_G(v_1)+d_G(v_n)>n$, then $G$ is
	pancyclic.
\end{lemma}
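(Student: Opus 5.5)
The plan is a direct counting argument on chords of the Hamilton cycle $C=v_1v_2\cdots v_nv_1$. The case $\ell=n$ is given by $C$ itself, so fix a length $\ell$ with $3\le\ell\le n-1$. I will produce a cycle of length $\ell$ that uses the edge $v_nv_1$, one chord at $v_1$, one chord at $v_n$, and segments of $C$.

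First, set up notation. Let $A=\{j\in[2,n-1]: v_1v_j\in E(G)\}$ and $B=\{i\in[2,n-1]: v_nv_i\in E(G)\}$. Then $|A|\ge d(v_1)-1$ and $|B|\ge d(v_n)-1$, since we remove at most the neighbour $v_n$ (resp.\ $v_1$). Hence
\[
|A|+|B|\ge d(v_1)+d(v_n)-2\ge n-1>n-2=|[2,n-1]|.
\]

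Second, record two cycle patterns that use $v_nv_1$.
\begin{enumerate}[(a)]
\item If $i\in B$, $j\in A$ and $j>i$, then $v_1v_2\cdots v_iv_nv_{n-1}\cdots v_jv_1$ is a cycle of length $i+(n-j+1)=n+1-(j-i)$.
\item If $i\in B$, $j\in A$ and $j\le i$, then $v_1v_jv_{j+1}\cdots v_iv_nv_1$ is a cycle of length $(i-j+1)+2=i-j+3$. When $j=i$ this is the triangle $v_1v_iv_nv_1$.
\end{enumerate}
Put $k=n+1-\ell\in[2,n-2]$. Pattern (a) gives length $\ell$ exactly when $j-i=k$. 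Pattern (b) gives length $\ell$ exactly when $i-j=\ell-3=(n-2)-k$.

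Third, observe that both conditions say $j\equiv i+k \pmod{n-2}$, with $i,j\in[2,n-1]$. Indeed, $i$ and $j$ lie in an interval of $n-2$ consecutive integers, so their difference is in $(-(n-2),n-2)$. The congruence therefore forces either $j-i=k$ (so $j>i$) or $j-i=k-(n-2)\le 0$ (so $j\le i$). These are exactly the two patterns. Identify $[2,n-1]$ with $\mathbb Z_{n-2}$. Then the sets $A$ and $B+k$ are subsets of a group of order $n-2$ whose sizes sum to at least $n-1$, so they intersect by pigeonhole. An element of the intersection yields $i\in B$, $j\in A$ with the required relation, and hence a cycle of length $\ell$.

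The only delicate point is the bookkeeping at the boundary. I need to check that the wrap-around case really corresponds to pattern (b) with the correct length, including the degenerate triangle case $j=i$ when $\ell=3$. I also need to exclude $v_n\in N(v_1)$ from $A$ and $v_1\in N(v_n)$ from $B$, so that all chords are genuine and all listed cycles are well defined. Once the modular reformulation is in place, the rest is immediate.
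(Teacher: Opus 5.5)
Your proof is correct. With $k=n+1-\ell$, both cycle patterns reduce to the single congruence $j\equiv i+k \pmod{n-2}$ on $[2,n-1]$. The range bound $|j-i|\le n-3$ leaves only the solutions $j-i=k$ and $j-i=k-(n-2)$. When $k=n-2$, only the triangle $j=i$ survives. Finally, $|A|+|B|=d(v_1)+d(v_n)-2\ge n-1>n-2$ forces $A\cap(B+k)\neq\emptyset$.

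The paper gives no proof of this lemma and simply cites Bondy and Schmeichel--Hakimi. Your argument is the standard chord-pairing count behind the result. It is usually phrased contrapositively: if there is no $\ell$-cycle, you get an injection from $N(v_1)\setminus\{v_n\}$ into the non-neighbours of $v_n$ among $v_2,\dots,v_{n-1}$. Your translation in $\mathbb{Z}_{n-2}$ is the same count and handles the boundary cases cleanly.

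One wording slip: pattern (a) does not use the edge $v_nv_1$, since it replaces that edge by the two chords $v_iv_n$ and $v_jv_1$. So the sentence saying both patterns use $v_nv_1$ should be corrected, though nothing in the argument depends on it.
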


\begin{lemma}[{Faudree, Favaron, Flandrin, and Li~\cite[Theorem 2]{faudree1996}}]
	\label{lem_faudree-hamilton-path}
	Let $G$ be a graph on $n\ge 3$ vertices with a Hamilton path
	$v_1v_2\cdots v_n$. If $v_1v_n\notin E(G)$ and
	$d_G(v_1)+d_G(v_n)\ge n$, then $G$ is pancyclic.
\end{lemma}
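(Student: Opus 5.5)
The plan is to argue directly from the Hamilton path $P=v_1v_2\cdots v_n$. I would first obtain a Hamilton cycle, and then show by a counting argument that a missing cycle length $\ell$ forces $d_G(v_1)+d_G(v_n)\le n-1$. Bondy's lemma (Lemma~\ref{lem_bondy-hamilton-cycle}) is not directly applicable: on the resulting Hamilton cycle we only know a degree sum $\ge n$, not $>n$, and the two vertices are not consecutive. So the argument has to be a self-contained Bondy-type count along $P$.

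\textbf{Step 1 (Hamiltonicity).} Let $S=\{i\in[1,n-2]: v_1v_{i+1}\in E(G)\}$ and $T=\{i\in[2,n-1]: v_iv_n\in E(G)\}$. The ranges are correct because $v_1v_n\notin E(G)$. We have $|S|=d(v_1)$ and $|T|=d(v_n)$, and $S\cup T\subseteq[1,n-1]$. Hence $|S\cap T|\ge d(v_1)+d(v_n)-(n-1)\ge 1$. Any $i\in S\cap T$ yields the Hamilton cycle $v_1v_{i+1}v_{i+2}\cdots v_nv_iv_{i-1}\cdots v_1$. This settles length $n$.

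\textbf{Step 2 (length $\ell\in[3,n-1]$).} Fix such an $\ell$ and suppose $G$ has no $\ell$-cycle. I would use three families of cycles built from $P$ and chords at $v_1$ and $v_n$:
\begin{itemize}
\item[(A)] $v_1v_a v_{a+1}\cdots v_{a+\ell-2}v_1$, which needs $v_1\sim v_a$ and $v_1\sim v_{a+\ell-2}$ with $a\ge 2$;
\item[(B)] the symmetric cycles at $v_n$;
\item[(C)] $v_1v_2\cdots v_iv_nv_{n-1}\cdots v_jv_1$ with $j=i+n-\ell+1$, which has length $i+(n-j+1)=\ell$ and needs $v_i\sim v_n$ and $v_1\sim v_j$.
\end{itemize}
Each forbidden cycle gives an exclusion: for every index, at most one of two specified "slots" carries an edge. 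I would then build an injection from $N(v_1)\cup N(v_n)$, viewed as a multiset of indices, into $[1,n-1]$, and derive a contradiction with $d(v_1)+d(v_n)\ge n$. The injection would send the neighbour $v_j$ of $v_1$ to slot $j-1$ and the neighbour $v_i$ of $v_n$ to slot $i$, as in Step~1. A collision at slot $k$ then means $v_1\sim v_{k+1}$ and $v_n\sim v_k$. Using the Hamilton cycle from that collision together with the (A)/(B)/(C) exclusions, one re-routes: shift the $v_n$-neighbours by $n-\ell$, or use the (A)/(B) offset $\ell-2$, so that every collision is charged to an otherwise empty slot.

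\textbf{Main obstacle.} The delicate part is organising these exclusions so that the total number of occupied slots is at most $n-1$ for every $\ell$. The map must stay injective across the three regimes: the (C) shift applies only for $i\le \ell-1$, and the (A)/(B) arguments are needed near the ends of $P$. Here I would first try the standard trick from Bondy's proof. Choose $k\in S\cap T$, work on the Hamilton cycle $C$ from Step~1, and note that $v_1$ and $v_n$ are at distance $\ell'$ on $C$. Then compare, for each vertex $x$ of $C$, the edges $v_1x$ and $v_nx^{\pm(\ell-2)}$, where $x^{\pm(\ell-2)}$ denotes the vertex $\ell-2$ steps along $C$ in the appropriate direction, so that every adjacent pair closes an $\ell$-cycle. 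This gives $d(v_1)+d(v_n)\le n$. The extra unit needed to contradict $\ge n$ would come from the fact that $v_1v_n\notin E(G)$ frees one slot. If that bookkeeping becomes too messy, the fallback is a separate treatment of small $\ell$ (say $\ell=3$, via a triangle through $v_1$ or $v_n$ forced by degree $>n/2$ on one side) and an induction on $n$ by deleting an end vertex of $P$ whose degree is small.
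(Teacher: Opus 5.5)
Note first that the paper gives no proof of this lemma. It is quoted from Faudree, Favaron, Flandrin, and Li, so your argument has to stand on its own. Your Step~1 is correct. But Step~2, which is the whole content of the lemma, is only announced: the injection is never defined, and the exclusions you plan to feed into it cannot suffice. For $n\ge 6$ and $\ell=n-1$, let $G$ be $P$ together with the chords $v_1v_3$, $v_1v_4$ and $v_nv_i$ for $i\in[3,n-2]$. Then $v_1v_n\notin E(G)$ and $d(v_1)+d(v_n)=3+(n-3)=n$. Now check your three families for $\ell=n-1$:
\begin{itemize}
\item an $(n-1)$-cycle of type (A) would need $v_1v_{n-1}\in E(G)$;
\item one of type (B) would need $v_2v_n\in E(G)$;
\item one of type (C), where $j=i+2$, would need $v_iv_n,v_1v_{i+2}\in E(G)$ with $i\ge 3$, which is impossible because $v_1$ has no neighbour beyond $v_4$.
\end{itemize}
So every exclusion you intend to use holds while $d(v_1)+d(v_n)=n$. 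No counting based on these exclusions alone can give $d(v_1)+d(v_n)\le n-1$.

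The $(n-1)$-cycle that does exist, $v_1v_3v_nv_{n-1}\cdots v_4v_1$, uses two chords at $v_1$ and one at $v_n$. It is the Step~1 Hamilton cycle for the crossing pair $v_1v_4$, $v_3v_n$, shortcut by the extra chord $v_1v_3$. A proof has to control such cycles systematically. Examples are the cycles $v_1v_jv_{j+1}\cdots v_kv_nv_{n-1}\cdots v_{k+1}v_1$ of length $n-j+2$ (for $k\in S\cap T$ and $v_1v_j\in E(G)$ with $j\le k$), their mirror images at $v_n$, and the analogous four-chord cycles. Your sentence about re-routing along the Hamilton cycle only gestures at this.

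Your fallbacks do not close the gap either. Bondy's count works because $v_1$ and $v_n$ are consecutive on the Hamilton cycle. Then every pair of edges $v_1v_j$, $v_nv_i$ with $2\le i,j\le n-1$ closes an $\ell$-cycle, either through the edge $v_1v_n$ (when $i-j=\ell-3$) or as in (C) (when $j-i=n-\ell+1$). Since these two offsets differ by $n-2$, the index set of $N(v_n)$ shifts injectively, modulo $n-2$, into the complement of that of $N(v_1)$.

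On your cycle $C$, however, $v_1$ and $v_n$ are at distance $\min\{k,n-k\}\ge 2$, and there is no edge $v_1v_n$ to close through. So a pair $v_1x$, $v_nx^{\pm(\ell-2)}$ does not in general lie on an $\ell$-cycle, and the bound $d(v_1)+d(v_n)\le n$ is unsupported. Even if it held, it would not contradict $d(v_1)+d(v_n)\ge n$. The ``extra unit'' you attribute to $v_1v_n\notin E(G)$ is exactly the missing content, and nonadjacency in fact works against you: it removes the closing cycles that Bondy's argument relies on.

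Finally, induction by deleting an end vertex fails because the hypotheses are not inherited. Removing $v_n$ leaves the path $v_1\cdots v_{n-1}$, whose ends may be adjacent. Its endpoint degree sum in $G-v_n$ is $d(v_1)+d(v_{n-1})-1$, which need not be at least $n-1$.
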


We will also need a consequence of $s$-Hamiltonicity. Unlike the
preceding results, it requires a degree bound at only one vertex.

\begin{lemma}
	\label{lem_s-hamiltonian}
	Let $s\ge 1$ be an integer and $G$ be an $s$-Hamiltonian graph on
	$n\ge s+3$ vertices. If $\Delta(G)>(n-s)/2$, then $G$ is pancyclic.
\end{lemma}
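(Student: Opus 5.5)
Let $v$ be a vertex of maximum degree $\Delta=d_G(v)>(n-s)/2$. The plan is to remove $v$ and use $s$-Hamiltonicity of $G$ on $G-v$ to produce a Hamilton path of $G-v$ whose endpoints are both neighbours of $v$, or more generally a structure in which $v$'s many neighbours force chords of every length.

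First, cycles of length $n$ come directly from $G$ being Hamiltonian, since $s\ge 1$ and $S=\emptyset$ is allowed. Next, since $s\ge1$, the graph $G-v$ is Hamiltonian. Fix a Hamilton cycle $C=u_1u_2\cdots u_{n-1}u_1$ of $G-v$. For any two neighbours $u_i,u_j$ of $v$ on $C$, the segment of $C$ from $u_i$ to $u_j$ together with $v$ gives a cycle whose length is the length of that segment plus one. So it suffices to show that, for every $\ell\in[3,n-1]$, there are two neighbours of $v$ at distance exactly $\ell-2$ along $C$ in one of the two directions, that is, indices whose difference is $\ell-2$ modulo $n-1$. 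A single fixed cycle will not always achieve this, because the neighbours of $v$ may be spaced regularly. This is where the extra deletions come in.

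For the general step, I would use induction on the cycle length downward, or argue via a pigeonhole on the set $A=N(v)$ with $|A|=\Delta>(n-s)/2$ on the cycle $\mathbb Z_{n-1}$. If no two elements of $A$ differ by $k=\ell-2$, then $A$ and $A+k$ are disjoint, so $2\Delta\le n-1$. This does not contradict $\Delta>(n-s)/2$ when $s\ge2$. To fix this, I would delete up to $s-1$ further vertices outside $N[v]$, or suitably chosen vertices, together with $v$. Taking $S$ to consist of $v$ and $s-1$ non-neighbours of $v$, the graph $G-S$ has a Hamilton cycle on $n-s$ vertices containing all $\Delta$ neighbours of $v$. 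The same disjointness argument on $\mathbb Z_{n-s}$ then gives $2\Delta\le n-s$, a contradiction.

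This requires $n-1-\Delta\ge s-1$ non-neighbours. If there are fewer, then $\Delta\ge n-s+1$. In that case I would delete $v$ together with all of its non-neighbours plus some neighbours, so that $G-S$ is a Hamilton cycle on vertices all adjacent to $v$. That immediately yields every length from $3$ up to $|V(G-S)|+1$. Varying $|S|$ then fills in the remaining lengths.

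To cover all lengths, the argument is applied for each target length $\ell$. A cycle of length $\ell\le n-s+1$ arises from a pair of neighbours at cyclic distance $\ell-2$ on the cycle of $G-S$. The lengths between $n-s+2$ and $n-1$ are handled by choosing $|S|$ smaller, with $|S|=r\le s$. Here the disjointness bound becomes $2\Delta\le n-r$, which still holds in our favour because $r\le s$. Lengths $\ell-2$ ranging up to $n-r-1$ on $\mathbb Z_{n-r}$ are covered, with the distance taken modulo the cycle length. Since every residue $k\in[1,n-r-1]$ is nonzero, the argument works for all $\ell\in[3,n-r+1]$, with the choice $r=1$ giving up to $n$.

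The main obstacle is ensuring that the deleted set $S$ can always consist of $v$ plus non-neighbours, so that all $\Delta$ neighbours survive. When $v$ has fewer than $r-1$ non-neighbours, I would handle it as in the previous case by including neighbours in $S$ while keeping the fraction of surviving neighbours above one half, since $|A|>(n-r)/2$ is all that the pigeonhole step needs.
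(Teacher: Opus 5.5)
Your treatment of lengths $3$ through $n-s+1$ is correct and matches the paper. You delete $v$ together with $\min\{s-1,n-1-\Delta\}$ non-neighbours and take a Hamilton cycle of the remainder. Then you either apply the $A\cap(A+k)=\emptyset$ count on $\mathbb Z_{n-s}$, or use that every vertex of the cycle is a neighbour of $v$.

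The gap is in the lengths $n-s+2,\dots,n-1$. You propose rerunning the count with $|S|=r<s$, and you assert that the bound $2\Delta\le n-r$ is still contradicted ``because $r\le s$''. The inequality runs the wrong way: $n-r>n-s$, so $\Delta>(n-s)/2$ does not give $\Delta>(n-r)/2$. You noticed this yourself for $r=1$ in your second paragraph. Concretely, when $n$ is odd, the set $A$ of even positions on $\mathbb Z_{n-1}$ has $(n-1)/2$ elements and satisfies $A\cap(A+k)=\emptyset$ for every odd $k$. So the count produces no odd cycle through $v$ from that Hamilton cycle of $G-v$. The same objection defeats ``the choice $r=1$ giving up to $n$''. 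It also defeats your final paragraph, since $|A|>(n-r)/2$ is precisely what is unavailable for $r<s$.

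The missing observation is that these lengths need no cycle through $v$ at all. For each $r\in[0,s]$, delete any $r$ vertices and take a Hamilton cycle of what remains; this is a cycle of length $n-r$ in $G$. So $s$-Hamiltonicity supplies every length in $[n-s,n]$ directly. Together with $[3,n-s+1]$ from your main step, this covers $[3,n]$, which is exactly how the paper finishes.
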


\begin{proof}
	Let $x\in V(G)$ with $d_G(x)=\Delta(G)$, and choose a set $S$ of
	nonneighbors of $x$ of size
	$\min\{s-1,n-1-d_G(x)\}$. Put $H=G-S$ and $m=|V(H)|$.
	The graph $H-x$ has a Hamilton cycle, and $d_H(x)=d_G(x)$.
	If $|S|=s-1$, then $d_H(x)>(n-s)/2=(m-1)/2$.
	Otherwise, all nonneighbors of $x$ have been deleted, so
	$d_H(x)=m-1>(m-1)/2$.
	
	Let $C=u_0u_1\cdots u_{m-2}u_0$ be a Hamilton cycle of $H-x$,
	with subscripts taken modulo $m-1$. Fix $\ell\in[3,m]$.
	Since $|N_H(x)|>(m-1)/2$, there is an index $i$ for which both
	$u_i$ and $u_{i+\ell-2}$ belong to $N_H(x)$. Otherwise, the map
	$u_i\mapsto u_{i+\ell-2}$ would inject $N_H(x)$ into its complement
	in $V(C)$. The cycle
	$xu_iu_{i+1}\cdots u_{i+\ell-2}x$ has length $\ell$.
	Thus $H$ is pancyclic. Since $G$ is $s$-Hamiltonian, it also has
	cycles of every length from $n-s$ to $n$. As $m\ge n-s+1$, these
	two ranges contain every length from $3$ to $n$.
\end{proof}

The following three results of Ho\`ang provide pancyclicity from
a degree sequence condition or from the number of vertices of
large degree.

\begin{lemma}[{Ho\`ang~\cite[Theorem~7]{hoang1995}}]
	\label{lem_hoang-no-bad-index}
	Let $t\ge 1$ be an integer and $G$ be a Hamiltonian $t$-tough graph
	on $n\ge 3$ vertices with degree sequence $d_1,d_2,\ldots,d_n$.
	If, for every integer $i$ with $t\le i<n/2$, $d_i\le i$ implies
	$d_{n-i+t}\ge n-i$, then $G$ is pancyclic or bipartite.
\end{lemma}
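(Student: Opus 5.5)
This is a result of Hoàng that is quoted from the literature, so the paper may just cite it. If I were to prove Lemma~\ref{lem_hoang-no-bad-index} myself, I would follow the Schmeichel--Hakimi strategy for Chv\'atal's condition and use toughness to shift the indices by $t$.

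I fix a Hamilton cycle $C=v_1v_2\cdots v_nv_1$ and argue by contradiction: assume $G$ is neither pancyclic nor bipartite. By Lemma~\ref{lem_bondy-hamilton-cycle}, every pair of vertices that are consecutive on any Hamilton cycle has degree sum at most $n$. Lemma~\ref{lem_faudree-hamilton-path} gives the same bound for the endpoints of any Hamilton path whose endpoints are nonadjacent. The standard Schmeichel--Hakimi analysis of a nonpancyclic Hamiltonian graph with this property yields a structural dichotomy. Either $G$ is bipartite-like, or the vertices split into a low-degree and a high-degree part. In the second case there is an index $i<n/2$ and a set $A$ of at least $i$ vertices of degree at most $i$, while at most $i$ vertices have degree at least $n-i$.

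Now I bring in toughness. I take a vertex set $A$ of low-degree vertices on $C$ that is independent, or nearly so, and whose members have pairwise small degree sums. Its neighborhood $N(A)$ is a separating set, and removing it leaves $A$ as components. By Lemma~\ref{lem_toughness-observation}, $|N(A)|\ge t|A|$. This is exactly what lets the hypothesis be read at index $n-i+t$ rather than $n-i+1$. In the degree counting, the vertices of degree at least $n-i$ must then number at least $i-t+1$, i.e.\ $d_{n-i+t}\ge n-i$. The lower bound $i\ge t$ is automatic, since $\delta\ge 2t$ by Lemma~\ref{lem_toughness-observation}.

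Given $d_i\le i$, the hypothesis produces enough vertices of degree at least $n-i$. Two of them at distance $2$ on $C$, or a rotation-produced Hamilton path whose endpoints are among them, give an endpoint degree sum exceeding $n$. That contradicts Lemma~\ref{lem_bondy-hamilton-cycle} or Lemma~\ref{lem_faudree-hamilton-path}.

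The main obstacle is the counting step: showing that a nonpancyclic $G$ forces some $i\in[t,n/2)$ with $d_i\le i$ and $d_{n-i+t}<n-i$. My first attempt would define $i$ as the number of vertices of degree at most $n/2$, or the index where the degree sequence crosses $n/2$. I would then use the fact that, on $C$, a high-degree vertex's successor must have low degree, which gives an injection between the two classes. Toughness would supply the extra $t-1$ slack.
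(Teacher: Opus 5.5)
The paper does not prove this lemma; it is quoted from Ho\`ang. Judged on its own, your sketch has a genuine gap. It sits exactly at the step you call the main obstacle, and that step is the entire content of the lemma. Read contrapositively, the Schmeichel--Hakimi theorem gives an index $i<n/2$ with $d_i\le i$ and $d_{n-i}<n-i$, i.e.\ at most $i$ vertices of degree at least $n-i$. The hypothesis at the same index (legitimate, since $i\ge\delta\ge 2t$) gives at least $i-t+1$ such vertices. These two statements are compatible, so nothing is contradicted unless you exclude every count between $i-t+1$ and $i$. Your toughness paragraph does not do this:
\begin{itemize}
\item the set $A$ is never specified, and low-degree vertices need not be independent;
\item $|N(A)|\ge t|A|$ is a lower bound on a neighborhood and says nothing about how many vertices have degree at least $n-i$;
\item the sentence concluding that these vertices ``number at least $i-t+1$'' merely restates the hypothesis, so it cannot clash with it.
\end{itemize}

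The positional endgame is also unsupported. The hypothesis guarantees only $i-t+1$ vertices of degree greater than $n/2$, and this number may be as small as $t+1$. Nothing places two of them consecutively on a Hamilton cycle or at the ends of a Hamilton path. Two vertices at distance $2$ on $C$ fall under neither Lemma~\ref{lem_bondy-hamilton-cycle} nor Lemma~\ref{lem_faudree-hamilton-path}. Moreover, forcing two of them within distance $2$ on $C$ by pigeonhole needs more than $n/3$ of them, which is exactly the hypothesis of Lemma~\ref{lem_hoang-R-third}. Note that in \eqref{eq_small-order} the paper obtains a Hamilton path between two such vertices only via Chv\'atal--Erd\H{o}s for small $n$.

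A route that works with the tools recorded in the paper is to count high-degree vertices. Assume $G$ is not complete, and let $k$ be the largest index below $n/2$ with $d_k\le k$.
\begin{itemize}
\item If no such $k$ exists or $k<\lceil n/2\rceil-1$, then $d_{\lceil n/2\rceil-1}\ge\lceil n/2\rceil$. So at least $\lfloor n/2\rfloor+2$ vertices have degree at least $n/2$, and Lemma~\ref{lem_hoang-R-half} applies. The exception $S_n$ does not arise here: for $n\ge 8$ it has $k=n/2-1$, and $S_4=C_4$ is bipartite.
\item If $k=\lceil n/2\rceil-1$, then $k\ge 2t$, and the hypothesis yields at least $\lceil n/2\rceil-t$ vertices of degree greater than $n/2$. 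Lemma~\ref{lem_hoang-R-third} finishes when $n>6t$. For $n\le 6t$ with $t\ge 3$ one has $\alpha(G)\le n/(t+1)<2t-1\le\kappa(G)-1$, so the argument of \eqref{eq_small-order} applies.
\end{itemize}
Only finitely many orders with $t\le 2$ then remain to be checked directly. In this route, toughness enters only through $\delta\ge 2t$ and the bound on $\alpha$, not through the index shift you envisaged.
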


For an integer $n\ge 4$ divisible by $4$, let $S_n$ be obtained
from a complete graph on $n/2$ vertices and a disjoint union of $n/4$ copies of $K_2$
by adding a perfect matching between their vertex sets.
Note that $\delta(S_n)=2$.

\begin{lemma}[{Ho\'ang~\cite[Theorem~8]{hoang1995}}]
	\label{lem_hoang-R-half}
	Let $G$ be a Hamiltonian graph on $n\ge 3$ vertices. If at least
	$n/2$ vertices of $G$ have degree at least $n/2$, then $G$ is
	pancyclic, bipartite, or isomorphic to $S_n$.
\end{lemma}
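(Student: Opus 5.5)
Since this is Hoàng's theorem, quoted from \cite{hoang1995}, I would reprove it by reducing to the Hamilton-cycle lemmas above. The reduction rests on the degree sums of vertices that are consecutive on a Hamilton cycle.

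Fix a Hamilton cycle $C=v_1v_2\cdots v_nv_1$ and let $R$ be the set of vertices of degree at least $n/2$, so $|R|\ge n/2$. The first step is a pigeonhole count on $C$. Either two vertices of $R$ are consecutive on $C$, or $n$ is even, $|R|=n/2$, and $R$ occupies alternate positions of $C$.

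\textbf{Case 1: two vertices $u,v\in R$ are consecutive on $C$.} Relabel so that $u=v_1$ and $v=v_n$. If $d(u)+d(v)>n$, then Lemma~\ref{lem_bondy-hamilton-cycle} gives pancyclicity. Otherwise $d(u)=d(v)=n/2$, and I would apply the refinement of Bondy's lemma by Schmeichel and Hakimi~\cite{schmeichel-hakimi1988}. It says that if $d(v_1)+d(v_n)\ge n$ on a Hamilton cycle, then $G$ is pancyclic, or bipartite, or misses only the $(n-1)$-cycle, with a rigid structure: the neighbourhoods of $v_1$ and $v_n$ interleave along $C$.

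In that last subcase I would exploit the other vertices of $R$. There are at least $n/2-2$ of them, each of degree at least $n/2$, and re-rooting the argument at any consecutive pair of $R$-vertices with degree sum greater than $n$ kills the exception. So every consecutive pair in $R$ has degree sum exactly $n$, and each such pair has interleaving neighbourhoods. I expect these constraints, propagated around $C$, to force $R$ to induce a clique on $n/2$ vertices with all other vertices of degree~$2$. That structure is exactly $S_n$, which also forces $4\mid n$.

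\textbf{Case 2: $R$ alternates on $C$.} Write $R=\{v_1,v_3,\ldots,v_{n-1}\}$. If $R$ is independent, every vertex of $R$ has all its at least $n/2$ neighbours among the $n/2$ even-indexed vertices. Hence $G[R,V\setminus R]$ is complete bipartite. Then either $G$ is bipartite, or an edge inside $V\setminus R$ together with $K_{n/2,n/2}$ yields all cycle lengths, both odd and even, directly.

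If instead $R$ contains a chord $v_iv_j$, I would use it to build a new Hamilton cycle on which two $R$-vertices are consecutive. The standard crossing-chord rotation should do this, using a neighbour of $v_{i+1}$ adjacent to $v_{j+1}$, which exists by the degree condition. This reduces Case 2 to Case 1.

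\textbf{Main obstacle.} The hard step is the equality subcase of Case 1: showing that the Schmeichel–Hakimi exceptional structure, imposed simultaneously at every consecutive $R$-pair, collapses precisely to $S_n$ and to nothing else. I would first try induction along $C$: show that each maximal run of $R$-vertices on $C$ is separated from the next run by exactly two vertices of degree~$2$.
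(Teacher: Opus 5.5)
The paper only quotes this lemma from Ho\`ang without proof, so your argument has to stand on its own. The pigeonhole split and the strict-inequality part of Case~1 (via Lemma~\ref{lem_bondy-hamilton-cycle}) are fine. However, both places where the real work lies are missing, and one of your reduction steps is false.

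\textbf{Case 2 cannot be reduced to Case 1.} Since $R$ is the set of \emph{all} vertices of degree at least $n/2$, your independent subcase is vacuous. If $R$ alternates on $C$ and is independent, each vertex of $R$ is adjacent to all $n/2$ vertices outside $R$. Those vertices then have degree at least $n/2$ and lie in $R$, a contradiction. So Case 2 rests entirely on the chord subcase, and there the rotation breaks down. It needs an edge at the successors $v_{i+1},v_{j+1}$ (such as $v_{i+1}v_{j+1}$), but these vertices lie outside $R$ and nothing bounds their degrees. In fact the Hamilton cycle you want need not exist. Take $K_m$ with $m\ge 3$ on $r_1,\dots,r_m$, and add vertices $s_1,\dots,s_m$ with $N(s_i)=\{r_i,r_{i+1}\}$ (indices mod $m$). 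Then $n=2m$ and $R=\{r_1,\dots,r_m\}$. Because every $s_i$ has degree $2$, the cycle $r_1s_1r_2s_2\cdots r_ms_mr_1$ is the unique Hamilton cycle, and $R$ alternates on it although $R$ is a clique. These graphs are pancyclic, so the lemma is not threatened. But Case 2 needs a direct proof of pancyclicity, not a reduction to Case 1.

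\textbf{The equality subcase of Case 1 is only conjectured.} The ``interleaving'' structure you attribute to Schmeichel--Hakimi is too vague to propagate. What their theorem actually supplies in the exceptional case is $d(v_2),d(v_3),d(v_{n-1}),d(v_{n-2})<n/2$. Apply this to every $C$-consecutive pair in $R$. Then every maximal run of $R$ on $C$ has length at most $2$, and each run of length $2$ is flanked on both sides by at least two vertices outside $R$. Compare each run with the gap after it and with the gap before it. With $|R|\ge n/2$, this forces $|R|=n/2$, $4\mid n$, the periodic pattern $RR\bar R\bar R$ around $C$ (where $\bar R=V(G)\setminus R$), and $d(v)=n/2$ for all $v\in R$. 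That is the ``runs separated by two vertices'' part of your plan.

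What remains is the step that actually produces $S_n$. You must show that every vertex of $\bar R$ has degree exactly $2$ and every vertex of $R$ has exactly one neighbour in $\bar R$; then $d(v)=n/2$ forces $R$ to be a clique. This amounts to proving that every further chord creates an $(n-1)$-cycle or a Hamilton cycle violating the pattern. ``I expect these constraints \dots to force'' is not an argument for this, and it is the heart of the lemma.
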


\begin{lemma}[{Ho\'ang~\cite[Lemma~10]{hoang1995}}]
	\label{lem_hoang-R-third}
	Let $G$ be a Hamiltonian graph on $n\ge 3$ vertices. If more than
	$n/3$ vertices of $G$ have degree greater than $n/2$, then $G$ is
	pancyclic.
\end{lemma}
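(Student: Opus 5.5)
The plan is to fix a Hamilton cycle $C=v_1v_2\cdots v_nv_1$ of $G$ and let $R$ be the set of vertices of degree greater than $n/2$, so $|R|>n/3$. Since degrees are integers, every $v\in R$ satisfies $d_G(v)\ge\lfloor n/2\rfloor+1$. The proof then splits according to how the vertices of $R$ sit on $C$.

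\emph{Case 1: two vertices of $R$ are consecutive on $C$.} Suppose $v_j,v_{j+1}\in R$. Relabel $C$ so that these two vertices are the ends $v_1$ and $v_n$ of the cycle. Then $d_G(v_1)+d_G(v_n)>n$, and Lemma~\ref{lem_bondy-hamilton-cycle} gives that $G$ is pancyclic.

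\emph{Case 2: no two vertices of $R$ are consecutive on $C$.} Every gap along $C$ between successive vertices of $R$ has length at least $2$. If every gap had length at least $3$, then $|R|\le n/3$, which is false. So some gap has length exactly $2$: there is an index $i$ with $v_{i-1},v_{i+1}\in R$ and $v_i\notin R$.

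Set $H=G-v_i$. Then $H$ has $n-1$ vertices and contains the Hamilton path $P=v_{i+1}v_{i+2}\cdots v_{i-1}$. Both ends of $P$ are adjacent to $v_i$, so
\[
d_H(v_{i-1})+d_H(v_{i+1})\ge 2\lfloor n/2\rfloor\ge n-1=|V(H)|.
\]

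\emph{Subcase 2a: $v_{i-1}v_{i+1}\notin E(G)$.} Lemma~\ref{lem_faudree-hamilton-path} applied to $H$ and $P$ gives cycles of every length from $3$ to $n-1$ in $H$, hence in $G$. Together with $C$, this shows $G$ is pancyclic.

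\emph{Subcase 2b: $v_{i-1}v_{i+1}\in E(G)$.} Now $P$ closes to a Hamilton cycle of $H$ in which $v_{i-1}$ and $v_{i+1}$ are consecutive.
\begin{itemize}
\item If $n$ is even, the degree sum is at least $n>|V(H)|$. Lemma~\ref{lem_bondy-hamilton-cycle} gives that $H$ is pancyclic, and adding $C$ finishes the proof.
\item If $n$ is odd, the sum may equal $|V(H)|$ exactly, so Lemma~\ref{lem_bondy-hamilton-cycle} does not apply directly.
\end{itemize}

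I expect this odd-$n$ equality case to be the main obstacle. There I would invoke Bondy's original equality version: a Hamilton cycle whose consecutive vertices have degree sum at least the order forces pancyclicity, except for $K_{m,m}$. This gives that either $H$ is pancyclic (and we are done with $C$), or $H\cong K_{m,m}$ with $2m=n-1$.

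In the exceptional case I would finish by hand, as follows.
\begin{itemize}
\item $v_{i-1}$ and $v_{i+1}$ are adjacent, so they lie in opposite parts of $H$. Hence $v_{i-1}v_iv_{i+1}$ is a triangle.
\item $K_{m,m}$ supplies every even length from $4$ to $n-1$.
\item For an odd length $\ell\in[5,n-2]$, take an even cycle of length $\ell-1$ in $K_{m,m}$ through the edge $v_{i-1}v_{i+1}$, and replace that edge by the path $v_{i-1}v_iv_{i+1}$.
\item Length $n$ is given by $C$.
\end{itemize}
If one prefers to avoid the equality version of Bondy's theorem, the fallback is to use the slack in $|R|>n/3$ when $n$ is odd. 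Either some gap has length $2$ with an endpoint of degree at least $(n+3)/2$, or many gaps have length $2$; in both situations one would choose $i$ so that the strict inequality holds.
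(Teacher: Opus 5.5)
The paper does not prove this lemma; it quotes it from Ho\`ang. So your argument has to stand on its own.

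Case~1, Subcase~2a, and the even-$n$ part of Subcase~2b are correct. The gap is in the odd-$n$ part of Subcase~2b, which is where the real content of the lemma lies. The ``equality version'' you attribute to Bondy is false. It would say that if two consecutive vertices on a Hamilton cycle have degree sum equal to the order, the graph is pancyclic unless it is $K_{m,m}$. In fact $K_{n/2,n/2}$ is the exception in Bondy's edge-count theorem ($|E(G)|\ge n^2/4$), not in this one.

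For instance, take the $6$-cycle $a_1b_1a_2b_2a_3b_3a_1$ with the chords $a_1b_2$ and $a_2b_3$. The consecutive vertices $a_1,b_3$ have degree sum $6$, yet the graph is bipartite and not $K_{3,3}$. The correct equality statement is due to Schmeichel and Hakimi~\cite{schmeichel-hakimi1988}. It says only that $H$ is pancyclic, bipartite, or has every cycle length except $|V(H)|-1=n-2$. Your hand argument covers only $K_{m,m}$.

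The bipartite outcome is easy to repair. Since $d_H(v_{i\pm1})\ge m=|V(H)|/2$, both $v_{i-1}$ and $v_{i+1}$ are complete to the opposite part. Write the Hamilton cycle of $H$ as $w_1w_2\cdots w_{2m}w_1$ with $w_1=v_{i+1}$ and $w_{2m}=v_{i-1}$. The cycles $w_{2m}w_1w_2\cdots w_{2k-1}w_{2m}$ with $2\le k\le m$ give every even length through the edge $w_1w_{2m}$. Rerouting that edge through $v_i$ gives every odd length. The third outcome, however, requires a cycle of length $n-2$ in $G$, and nothing in your proposal produces one.

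Your fallback does not close this either. If every vertex of $R$ has degree exactly $(n+1)/2$, then $d_H(v_{i-1})+d_H(v_{i+1})=n-1$ for every admissible $i$. So no choice of $i$ gives the strict inequality, however many gaps of length $2$ there are.

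One way to finish is a direct count. Write $n=2m+1\ge 7$ and $C=u_0u_1\cdots u_{2m}u_0$ with $u_0=v_{i-1}$, $u_1=v_i$, $u_2=v_{i+1}$. (For $n=5$ the triangle $u_0u_1u_2$ already has length $n-2$.) Using $C$ and the chord $u_0u_2$, an $(n-2)$-cycle exists in each of the following situations:
\begin{itemize}
\item $u_0$ is adjacent to one of $u_3,u_{2m-2},u_{2m-1}$;
\item $u_2$ is adjacent to one of $u_4,u_5,u_{2m}$;
\item $u_0u_a,u_2u_{a+2}\in E(G)$ for some $a$, giving the cycle $u_0u_au_{a-1}\cdots u_2u_{a+2}u_{a+3}\cdots u_{2m}u_0$.
\end{itemize}
Otherwise, let $A=\{a\in[4,2m-3]:u_0u_a\in E(G)\}$ and $B=\{b\in[6,2m-1]:u_2u_b\in E(G)\}$. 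Each has at least $m-2$ elements, while $A+2$ and $B$ are disjoint subsets of $[6,2m-1]$, which has only $2m-6$ elements. This is a contradiction. Together with the corrected equality theorem and the bipartite repair above, this completes the odd case.
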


Finally, we recall a generalized form of Hall's theorem. Akiyama
and Kano~\cite{akiyama2011} give an account of matching results of
this kind. We include the short reduction to Hall's theorem.  In our
application, $f$ is constantly $3$, and the resulting subgraph
provides pairwise disjoint sets of attachment vertices in a clique.

\begin{lemma}[Generalized marriage lemma]
	\label{lem_generalized-marriage}
	Let $G$ be a bipartite graph with bipartition $(X,Y)$, and let
	$f:X\to\mathbb Z_{\ge 0}$. Then $G$ has a spanning subgraph $F$
	such that $d_F(x)=f(x)$ for every $x\in X$ and $d_F(y)\le 1$
	for every $y\in Y$ if and only if
	$|N_G(S)|\ge\sum_{x\in S}f(x)$ for every $S\subseteq X$.
\end{lemma}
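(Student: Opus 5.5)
The plan is to reduce the statement to Hall's theorem by vertex splitting. Form an auxiliary bipartite graph $G'$ with bipartition $(X',Y)$. Here $X'$ consists of $f(x)$ copies $x^{(1)},\ldots,x^{(f(x))}$ of each $x\in X$; if $f(x)=0$, then $x$ has no copies. Each copy $x^{(k)}$ is joined to exactly the vertices of $N_G(x)\subseteq Y$.

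The key observation is a correspondence between subgraphs and matchings. Spanning subgraphs $F$ of $G$ with $d_F(x)=f(x)$ for all $x\in X$ and $d_F(y)\le 1$ for all $y\in Y$ correspond to matchings of $G'$ that saturate $X'$. In one direction, given such an $F$, list the $f(x)$ edges of $F$ at $x$ as $xy_1,\ldots,xy_{f(x)}$ and match $x^{(k)}$ to $y_k$. The matched $y$'s are distinct because each $y$ has $d_F(y)\le 1$. Conversely, a matching saturating $X'$ gives, for each $x$, exactly $f(x)$ edges $xy$ with distinct $y$'s. Since the matching is simple, the copies of $x$ go to distinct $y$, so there are no parallel edges. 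Each $y$ is used at most once overall, which gives the required $F$.

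Next I will translate the Hall condition. Necessity is direct: if $F$ exists and $S\subseteq X$, the $\sum_{x\in S}f(x)$ edges of $F$ at $S$ have pairwise distinct endpoints in $Y$. All these endpoints lie in $N_G(S)$, so $|N_G(S)|\ge\sum_{x\in S}f(x)$.

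For sufficiency, take any $S'\subseteq X'$ and let $S$ be the set of $x\in X$ having at least one copy in $S'$. Then $N_{G'}(S')=N_G(S)$ and $|S'|\le\sum_{x\in S}f(x)$. The hypothesis therefore gives $|N_{G'}(S')|\ge|S'|$. Hall's theorem then yields a matching saturating $X'$, and hence the subgraph $F$.

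The argument is routine. The only point needing care is the bookkeeping: showing that distinct copies of $x$ produce distinct edges, and handling $f(x)=0$ (no copies, so $x$ contributes nothing). I do not expect a genuine obstacle.
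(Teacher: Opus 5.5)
Your proposal is correct and takes the same approach as the paper. Necessity comes from the distinct $Y$-endpoints of the edges of $F$ at $S$. Sufficiency comes from splitting each $x$ into $f(x)$ copies with neighborhood $N_G(x)$, verifying Hall's condition via $|N(U)|=|N_G(S)|\ge\sum_{x\in S}f(x)\ge|U|$, and identifying the copies in the resulting matching to recover $F$.
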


\begin{proof}
Necessity follows because the edges of $F$ incident with $S$
have distinct endpoints in $Y$.
For sufficiency, replace each $x\in X$ by $f(x)$ copies with
the same neighborhood in $Y$, and let $X'$ be the set of all
copies. For a set $U\subseteq X'$, let $S\subseteq X$ consist
of the vertices represented in $U$. The new bipartite graph
has $|N(U)|=|N_G(S)|\ge\sum_{x\in S}f(x)\ge|U|$.
Hall's theorem therefore gives a matching covering $X'$.
Identifying the copies of each $x$ gives the required edges of
$F$, and all remaining vertices are included as isolated vertices.
\end{proof}

\section{Proof of Theorem~\ref{thm_counterexample}}
\label{sec_counterexample}

\begin{proof}
	Let $n\ge 10$ be an integer and put $m=\lfloor n/2\rfloor$.
	We give a construction that works for both parities of $n$.
	Let $B=\{b_1,\ldots,b_m\}$, and partition $B$ into sets $B_1$
	and $B_2$ of sizes $\lfloor m/2\rfloor$ and $\lceil m/2\rceil$,
	respectively, so that $b_1,b_m\in B_1$ and $b_2,b_3\in B_2$.
	Such a partition exists because $m\ge 5$.
	Let $Z=\{z_1,\ldots,z_{n-m-2}\}$, and let $x$ and $y$ be two
	additional vertices.
	
	Define $G$ on $B\cup Z\cup\{x,y\}$ by making $Z$ complete to
	$B$, joining $x$ to every vertex of $B_1$, joining $y$ to every
	vertex of $B_2$, and adding the edge $xy$. These are all the
	edges of $G$. Equivalently, $G$ is obtained from the complete
	bipartite graph with parts $Z\cup\{x,y\}$ and $B$ by deleting
	the edges from $x$ to $B_2$ and from $y$ to $B_1$, and then
	adding $xy$.
	
	The graph $G-xy$ is bipartite, and $x$ and $y$ have no common
	neighbor. Thus $G$ is triangle-free and hence is not pancyclic.
	On the other hand, $xb_1z_1b_2yx$ is a cycle of length $5$, so
	$G$ is not bipartite.
	
	We next show that $G$ is Hamiltonian. If $n=2m$, then $|Z|=m-2$
	and a Hamilton cycle is
	\[
	xb_1z_1b_2yb_3z_2b_4\cdots z_{m-2}b_mx.
	\]
	If $n=2m+1$, then $|Z|=m-1$ and a Hamilton cycle is
	\[
	xyb_2z_1b_3z_2\cdots b_mz_{m-1}b_1x.
	\]
	Consequently, $G$ is $1$-tough.
	
	It remains to check the degree sequence condition. The degrees are
	\[
	\begin{aligned}
		d_G(x)&=\lfloor m/2\rfloor+1,
		& d_G(y)&=\lceil m/2\rceil+1,\\
		d_G(z)&=m\quad(z\in Z),
		& d_G(b)&=\lceil n/2\rceil-1\quad(b\in B).
	\end{aligned}
	\]
	Let $d_1,d_2,\ldots,d_n$ be the degree sequence of $G$, and put
	$h=\lceil n/2\rceil-1$. If $n$ is even, the first two terms are
	$\lfloor m/2\rfloor+1$ and $\lceil m/2\rceil+1$, followed by
	$m$ terms equal to $m-1$ and $m-2$ terms equal to $m$.
	If $n$ is odd, the first two terms are the same and all remaining
	$2m-1$ terms are equal to $m$. Since $m\ge 5$, in both cases
	$d_i>i$ for every $i\in[1,h-1]$, while
	$d_h=d_{n-h+1}=h<n-h$.
	Thus $h$ is the only index for which $d_h\le h<n/2$ and
	$d_{n-h+1}<n-h$. There is no integer $j$ with $h<j<n/2$, so
	the required degree-sum condition holds vacuously at this index.
	At the omitted midpoint index $j=\lceil n/2\rceil$, the
	degree sum $d_j+d_{n-j+1}$ is $n-2$ when $n$ is even and
	$n-1$ when $n$ is odd. Thus these graphs fail precisely the
	additional degree-sum requirement in
	Theorem~\ref{thm_hoang-tough}.
	This completes the construction.
\end{proof}

\section{Proof of Theorem~\ref{thm_main-theorem}}
\label{sec_main-proof}

\begin{proof}
	Let $t\ge 7$ be an integer and $G$ be a graph satisfying the
	hypotheses of Theorem~\ref{thm_main-theorem}.
By Theorem~\ref{thm_shan-tanyel-hamiltonicity}, $G$ is Hamiltonian. We may assume that $G$ is not complete.
	By Lemma~\ref{lem_toughness-observation},
	$\delta(G)\ge\kappa(G)\ge 2t$ and $\alpha(G)\le n/(t+1)$.
	Label the vertices as $v_1,\ldots,v_n$ so that $d_G(v_i)=d_i$,
	and put $p=\lceil n/2\rceil-1$.
	
	If there is no integer $h\in[t,p]$ with $d_h\le h$ and
	$d_{n-h+t}<n-h$, then Lemma~\ref{lem_hoang-no-bad-index} implies
	that $G$ is pancyclic or bipartite. Since a bipartite graph on
	at least three vertices has toughness at most $1$, $G$ is
	pancyclic. We therefore assume that such an index $h$ exists.
	In particular, $2t\le\delta(G)\le d_h\le h<n/2$.
	
	\begin{claim}
		\label{claim_uniqueness-of-h}
		The index $h$ is unique.
	\end{claim}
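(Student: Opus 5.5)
Suppose for contradiction that there are two such indices $h<h'$ in $[t,p]$, so $d_h\le h$, $d_{n-h+t}<n-h$, $d_{h'}\le h'$ and $d_{n-h'+t}<n-h'$. The plan is to apply the hypothesis of Theorem~\ref{thm_main-theorem} at the smaller index $h$ with $j=h'$. This is legitimate because $h<h'\le p<n/2$, and it gives
\[
d_{h'}+d_{n-h'+t}\ge n.
\]
On the other hand, the two defining inequalities at $h'$ give
\[
d_{h'}+d_{n-h'+t}\le h'+(n-h'-1)=n-1.
\]
These bounds contradict each other, so no second index exists and $h$ is unique.

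The only point to check is that $j=h'$ lies in the range $i<j<n/2$ with $i=h$. This holds because $h'\le p=\lceil n/2\rceil-1<n/2$ for both parities of $n$. The monotonicity of the sequence plays no role, since we use the bounds at index $h'$ directly. I expect no real obstacle; the argument is a one-line contradiction.
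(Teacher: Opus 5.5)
Your proof is correct and follows the paper's argument: you apply the degree sequence hypothesis at the smaller index with $j$ equal to the larger one, and this contradicts the two defining inequalities at the larger index. Your check that $h'\le p=\lceil n/2\rceil-1<n/2$ for both parities is precisely what justifies this choice of $j$.
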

	
	\begin{proof}[Proof of Claim~\ref{claim_uniqueness-of-h}]
		Suppose that $h_1<h_2$ are two such indices. Applying the degree
		sequence condition at $h_1$ gives
		$d_{h_2}+d_{n-h_2+t}\ge n$, contrary to
		$d_{h_2}\le h_2$ and $d_{n-h_2+t}<n-h_2$.
	\end{proof}
	
	Let $R=\{v\in V(G):d_G(v)>n/2\}$. We record a consequence that
	will avoid repeating the same argument in two cases below:
	\begin{equation}
		\label{eq_small-order}
 \text{If} \quad	n\le 12t\ \text{ and }\ |R|\ge 2, 
		\quad  \text{then} \quad  G \quad \text{is pancyclic}.
	\end{equation}
	Indeed, since $t\ge 7$,
	\[
	\alpha(G)\le\frac{n}{t+1}\le\frac{12t}{t+1}
	<2t-1\le\kappa(G)-1.
	\]
	Thus $G$ is Hamiltonian-connected by
	Lemma~\ref{lem_chvatal-hamiltonian-connected}. Choose distinct
	$x,y\in R$ and a Hamilton path with endpoints $x$ and $y$.
	Since $d_G(x)+d_G(y)>n$, Lemma~\ref{lem_bondy-hamilton-cycle}
	applies if $xy\in E(G)$, and
	Lemma~\ref{lem_faudree-hamilton-path} applies otherwise.
	This proves~\eqref{eq_small-order}.
	
	\medskip
	\noindent\textbf{Case 1.} $h<p$.
	
	If $d_p>p$, then at least $n-p+1\ge n/2$ vertices have degree
	at least $n/2$. By Lemma~\ref{lem_hoang-R-half}, $G$ is
	pancyclic, bipartite, or isomorphic to $S_n$. The last two
	possibilities are excluded by $t\ge 7$ and
	$\delta(G)\ge 2t>2$.
	
	Suppose that $d_p\le p$. By Claim~\ref{claim_uniqueness-of-h},
	$d_{n-p+t}\ge n-p>n/2$, so $|R|\ge p-t+1$.
	Since $p\ge d_p\ge 2t$, we have $|R|\ge t+1\ge 2$.
	If $n\le 12t$, apply~\eqref{eq_small-order}.
	Otherwise, $|R|\ge p-t+1\ge n/2-t>n/3$, and
	Lemma~\ref{lem_hoang-R-third} implies that $G$ is pancyclic.
	
	\medskip
	\noindent\textbf{Case 2.} $h=p$.
	
	We distinguish three subcases according to the indices below $h$
	for which $d_i\le i$.
	
	\medskip
	\noindent\textbf{Case 2.1.} $d_i>i$ for every $i\in[1,h-1]$.
	
	Let $G^*=C_{n-t+5}(G)$. Since $h-1<d_{h-1}\le d_h\le h$,
	we have $d_{h-1}=d_h=h$. For distinct $i,j\in[h-1,n]$,
	$d_i+d_j\ge 2h\ge n-2\ge n-t+5$. Hence
	$\{v_{h-1},v_h,\ldots,v_n\}$ is  a clique in $G^*$.
	
	Suppose that $\{v_j,\ldots,v_n\}$ is  a clique in $G^*$
	for some $j\in[2,h-1]$. For every $i\in[j,n]$,
	\[
	d_{G^*}(v_{j-1})+d_{G^*}(v_i)
	\ge d_{j-1}+n-j\ge n>n-t+5.
	\]
	Thus $v_{j-1}$ is adjacent in $G^*$ to every vertex of this
	clique. Repeating this argument shows that $G^*$ is complete.
	By Lemma~\ref{lem_s-hamiltonian-closure} with $s=3$, $G$ is
	$3$-Hamiltonian. Since $\Delta(G)\ge h>(n-3)/2$,
	Lemma~\ref{lem_s-hamiltonian} implies that $G$ is pancyclic.
	
	\medskip
	\noindent\textbf{Case 2.2.} There is an index $i<h$ with
	$d_i\le i$, and the largest such index $k$ satisfies
	$k\ge n/2-t$.
	
	By Claim~\ref{claim_uniqueness-of-h},
	$d_{n-k+t}\ge n-k>n/2$, so $|R|\ge k-t+1$.
	Also, $k\ge d_k\ge 2t$, and hence $|R|\ge t+1\ge 2$.
	If $n\le 12t$, apply~\eqref{eq_small-order}.
	If $n>12t$, then
	\[
	|R|\ge k-t+1\ge\frac n2-2t+1>\frac n3,
	\]
	and the conclusion follows from Lemma~\ref{lem_hoang-R-third}.
	
	\medskip
	\noindent\textbf{Case 2.3.} There is an index $i<h$ with
	$d_i\le i$, and the largest such index $k$ satisfies
	$k<n/2-t$.
	
	In this case $k<h-1$, so $d_{h-1}=d_h=h$.
	Also, Claim~\ref{claim_uniqueness-of-h} gives
	\begin{equation}
		\label{eq_large-maximum-degree}
		\Delta(G)\ge d_{n-k+t}\ge n-k>\frac n2.
	\end{equation}
	Let $G^*=C_{n-t+2}(G)$. It is enough to show that $G^*$ is
	$1$-Hamiltonian: Lemma~\ref{lem_s-hamiltonian-closure} then gives
	that $G$ is $1$-Hamiltonian, and
	Lemma~\ref{lem_s-hamiltonian} applies by
	\eqref{eq_large-maximum-degree}.
	
	We will find a clique $Q$ such that each component of $G^*-Q$
	is an isolated vertex or an edge. We will then choose three
	attachment vertices in $Q$ for each component, with these choices
	pairwise disjoint, and use them to construct Hamilton cycles
	after deleting at most one vertex.
	
	Order the vertices of $G^*$ as $u_1,\ldots,u_n$ so that
	$d_i^*=d_{G^*}(u_i)$ and $d_1^*\le\cdots\le d_n^*$.
	The orderings of $V(G)$ and $V(G^*)$ need not agree. Nevertheless,
	$d_i\le d_i^*$ for every $i$: otherwise at least $i$ vertices
	would have degree less than $d_i$ in $G^*$, and hence also in
	$G$, contrary to the definition of $d_i$.
	By the definition of closure,
	\begin{equation}
		\label{eq_nonedge-closure}
		u_iu_j\notin E(G^*)
		\quad \text{implies}\quad d_i^*+d_j^*\le n-t+1.
	\end{equation}
	
	Since $d_{h-1}^*\ge h$ and $2h\ge n-2\ge n-t+2$, the set
	$\{u_{h-1},u_h,\ldots,u_n\}$ forms  a clique in $G^*$.
	If $G^*$ is complete, it is $1$-Hamiltonian and we are done.
	Otherwise, choose $q$ as small as possible so that
	$Q=\{u_{q+1},\ldots,u_n\}$ forms a clique in $G^*$.
	Then $1\le q\le h-2$ and $|Q|>n/2$.
	
	By the choice of $q$, the vertex $u_q$ has a nonneighbor $u_j$
	in $Q$. Since $d_j^*\ge n-q-1$, \eqref{eq_nonedge-closure}
	gives $d_q^*\le q-t+2<q$. Thus $d_q\le q$, and the choice
	of $k$ implies $q\le k$. Moreover, for every $i\in[1,q]$,
	$d_i^*\le d_q^*<q<|Q|$, so $u_i$ has a nonneighbor in $Q$.
	Let $\ell_i$ be the largest index of such a nonneighbor, and put
	$T_i=\{u_{\ell_i+1},\ldots,u_n\}$. Then
	$T_i\subseteq N_{G^*}(u_i)$ and $n-\ell_i\le d_i^*$.
	
	\begin{claim}
		\label{claim_clique-neighborhoods}
		For every $i\in[1,q]$, $|N_{G^*}(u_i)\setminus T_i|\le 1$.
	\end{claim}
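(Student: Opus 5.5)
The plan is to argue by contradiction with a degree count that plays the closure inequality \eqref{eq_nonedge-closure} against the degree sequence hypothesis. Write $r_i=|N_{G^*}(u_i)\setminus T_i|$, so that
\[
d_i^*=(n-\ell_i)+r_i ,
\]
because $T_i\subseteq N_{G^*}(u_i)$ and $|T_i|=n-\ell_i$. Suppose $r_i\ge 2$ for some $i\in[1,q]$. The vertex $u_{\ell_i}\in Q$ is a nonneighbor of $u_i$, so \eqref{eq_nonedge-closure} gives
\[
(n-\ell_i)+r_i+d_{\ell_i}^*\le n-t+1,\qquad\text{that is,}\qquad d_{\ell_i}^*\le \ell_i-t+1-r_i\le \ell_i-t-1 .
\]
The goal is therefore to show that no index $\ell\ge q+1$ carrying a vertex $u_\ell\in Q$ nonadjacent to some low vertex can have $d_\ell^*\le \ell-t-1$. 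I would get the needed lower bounds on $d_\ell^*$ from $d_\ell\le d_\ell^*$ and the structure of the original degree sequence, splitting according to the position of $\ell$. I have not checked that the three ranges below cover every $\ell$ without a gap.

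\emph{Small $\ell$.} If $q<\ell\le h-1$ and $\ell>k$, then the maximality of $k$ gives $d_\ell>\ell$. This is far above $\ell-t-1$, which is a contradiction. The remaining indices $q<\ell\le k$ need separate handling. There I would use that $u_\ell\in Q$ is adjacent to all of $Q$, so $d_\ell^*\ge n-q-1$. Since $k<n/2-t$, the inequality $n-q-1>\ell-t-1$ holds for all $\ell\le k$.

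\emph{Middle range.} For $h-1\le \ell$ we have $d_\ell\ge h$, and also $d_\ell^*\ge n-q-1>n/2$. These bounds settle every $\ell\le n-q+t-1$.

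\emph{Large $\ell$.} Here $u_\ell$ lies near the top of the ordering, and I would not argue with $u_\ell$ alone. Instead I would use the whole set $\{u_{\ell},\ldots,u_n\}$: all of these vertices have degree at least $d_\ell^*$, all of them lie in $Q$, and $T_i\subseteq N_{G^*}(u_i)$. If $d_\ell^*$ were as small as $\ell-t-1$, then the top $n-\ell+1$ degrees of $G^*$ would be small. Since $d_j\le d_j^*$ for every $j$, the top degrees of $G$ would be small as well. This would contradict $d_{n-k+t}\ge n-k$, which comes from Claim~\ref{claim_uniqueness-of-h} together with $d_k\le k$. Concretely, I expect to compare $\ell$ with $n-k+t$ and to use $q\le k$ to close the inequality.

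I expect this last range to be the main obstacle. The two bounds available are $d_\ell^*\ge n-q-1$ and $d_{n-k+t}\ge n-k$, and both are weakest exactly when $\ell$ is close to $n$. If the counting is too weak there, my fallback is a toughness argument. Let $S$ be the set of neighbors of $u_i$ outside $T_i$ together with $u_{\ell_i}$, and compare the components of $G^*-S$, or of $G-S$, using $\kappa(G)\ge 2t$ from Lemma~\ref{lem_toughness-observation}. The aim would be to show that two low neighbors of $u_i$ force a separating set that is too small.
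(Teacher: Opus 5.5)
Your first two ranges are correct but carry no weight. Every $u_\ell\in Q$ has $d_\ell^*\ge n-q-1$, which already excludes $d_\ell^*\le \ell-t-1$ for all $\ell\le n-q+t-1$. The whole content of the claim therefore sits in your third range, and the argument you sketch there does not work.

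The bound $d_\ell^*\le\ell-t-1$ controls the degrees in positions $q+1,\dots,\ell$. It bounds $d_{\ell+1}^*,\dots,d_n^*$ only from below, so it does not make the top degrees small.

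Comparing with the fixed index $k$ also fails in general. To contradict $d_{n-k+t}\ge n-k$ through $d_{n-k+t}\le d_{\ell_i}\le d_{\ell_i}^*\le n-t+1-d_i^*$, you need $n-k+t\le \ell_i$ and $d_i^*\ge k-t+2$. But $d_i^*\le d_q^*\le q-t+2\le k-t+2$, so this succeeds only if $q=k$ and $d_i^*=k-t+2$. With your weakened bound $d_{\ell_i}^*\le \ell_i-t-1$, it succeeds only if $\ell_i=n-k+t$.

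The toughness fallback cannot rescue this, because closure and $t$-toughness alone do not imply the claim. Let the low vertices induce a complete multipartite graph with $t+2$ parts of size $s+t-2$, where $s\ge 2t$, and join all of them to the same $s$ top vertices of $Q$. For large $n$ this graph is $t$-tough and $(n-t+2)$-closed, yet every low vertex has many neighbours outside $T_i$. Such configurations are excluded only by the degree-sequence hypothesis, through Claim~\ref{claim_uniqueness-of-h}, applied at an index adapted to $u_i$ rather than at $k$.

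The missing idea is to build a second bad index from $d_i^*$ itself. Put $r=t+d_i^*-2$ (unrelated to your $r_i$). Your assumption $r_i\ge 2$ says exactly that $\ell_i\ge n-d_i^*+2=n-r+t$. Monotonicity of the degree sequence, the domination $d_j\le d_j^*$, and \eqref{eq_nonedge-closure} then give $d_{n-r+t}\le d_{\ell_i}\le d_{\ell_i}^*\le n-t+1-d_i^*=n-r-1$. If also $r\le i$, then $d_r\le d_i\le d_i^*\le r$. Since $r\ge 3t-2\ge t$ and $i\le q<h$, the index $r$ would then contradict Claim~\ref{claim_uniqueness-of-h}.

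The condition $r\le i$ means $d_i^*\le i-t+2$. By \eqref{eq_nonedge-closure} this requires $d_{\ell_i}^*\ge n-i-1$, so $u_{\ell_i}$ must be adjacent to $u_{i+1},\dots,u_q$ as well as to the rest of $Q$. The bound $d_{\ell_i}^*\ge n-q-1$ yields only $r\le q$, which does not give $d_r\le r$.

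The paper's proof secures $r\le i$ by downward induction on $i$. If $\ell_j\ge \ell_i$ for some $j\in[i+1,q]$, then $d_i^*\le d_j^*\le n-\ell_j+1\le n-\ell_i+1$ directly. Otherwise $u_{\ell_i}\in T_j\subseteq N_{G^*}(u_j)$ for every such $j$. Neither the $d_i^*$-dependent index nor this induction appears in your plan, so as it stands the proposal does not prove the claim.
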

	
	\begin{proof}[Proof of Claim~\ref{claim_clique-neighborhoods}]
		We show that $d_i^*\le n-\ell_i+1$ by downward induction on $i$.
		Suppose the assertion holds for all indices in $[i+1,q]$.
		If there is an index $j\in[i+1,q]$ with $\ell_j\ge\ell_i$, then
		$d_i^*\le d_j^*\le n-\ell_j+1\le n-\ell_i+1$, as required.
		
		We may therefore assume that $\ell_i>\ell_j$ for every
		$j\in[i+1,q]$. The vertex $u_{\ell_i}$ is then adjacent to
		$u_{i+1},\ldots,u_q$ and to every vertex of $Q$ other than itself.
		Consequently, $d_{\ell_i}^*\ge n-i-1$, and
		\eqref{eq_nonedge-closure} gives $d_i^*\le i-t+2$.
		Put $r=t+d_i^*-2$. Since $d_i^*\ge\delta(G)\ge 2t$,
		we have $t\le r\le i<h$.
		
		Suppose to the contrary that $\ell_i\ge n-d_i^*+2$.
		Using $r\le i$, the domination $d_j\le d_j^*$, and
		\eqref{eq_nonedge-closure}, we obtain
		\[
		\begin{aligned}
			d_r&\le d_i\le d_i^*\le r,\\
			d_{n-r+t}=d_{n-d_i^*+2}
			&\le d_{\ell_i}\le d_{\ell_i}^*
			\le n-t+1-d_i^*=n-r-1.
		\end{aligned}
		\]
		Thus $r<h$ is another index with $d_r\le r$ and
		$d_{n-r+t}<n-r$, contrary to
		Claim~\ref{claim_uniqueness-of-h}. Therefore
		$\ell_i\le n-d_i^*+1$, or equivalently,
		$d_i^*\le n-\ell_i+1$. When $i=q$, the set $[i+1,q]$ is empty, so there is no
		index $j$ to which the induction hypothesis could apply.
		The argument starting with the degree bound on $u_{\ell_i}$
		therefore applies directly and establishes the base case.
	\end{proof}
	
	For $x=u_i\notin Q$, write $T(x)=T_i$. Each set $T(x)$ consists of all vertices after a specified
	position in the fixed ordering $u_{q+1},u_{q+2},\ldots,u_n$ of $Q$.
	Consequently, if $\ell_i\le\ell_j$, then $T_j\subseteq T_i$,
	so any two of these sets are nested.
	Claim~\ref{claim_clique-neighborhoods} also implies that each
	vertex of $G^*-Q$ has at most one neighbor in $G^*-Q$.
	Hence every component of $G^*-Q$ is an isolated vertex or an edge.
	
	Let these components be $D_1,\ldots,D_r$.
	If $D_i$ consists of a single vertex, denote it by $x_i$ and call
	it the primary vertex. If $D_i$ is an edge, write $D_i=x_iy_i$ so that
	$T(x_i)\subseteq T(y_i)$, and call $x_i$ its primary vertex
	and $y_i$ its secondary vertex. In the latter case, the edge
	$x_iy_i$ accounts for the possible neighbor outside each terminal
	segment. Thus
	\begin{equation}
		\label{eq_common-neighbors}
		N_{G^*}(x_i)\cap Q=T(x_i)\subseteq T(y_i)
		=N_{G^*}(y_i)\cap Q.
	\end{equation}
	Let $X=\{x_1,\ldots,x_r\}$, and let $B$ be the bipartite graph
	with bipartition $(X,Q)$ consisting of all edges of $G^*$ between
	these two sets.
	
	\begin{claim}
		\label{claim_matching}
		There are pairwise disjoint sets $A_1,\ldots,A_r\subseteq Q$
		such that $|A_i|=3$ and every vertex of $A_i$ is adjacent to
		every vertex of $D_i$ for each $i\in[1,r]$.
	\end{claim}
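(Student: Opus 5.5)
We apply Lemma~\ref{lem_generalized-marriage} to the bipartite graph $B$ with $f\equiv 3$ on $X$. The resulting subgraph $F$ gives pairwise disjoint sets $A_i=N_F(x_i)\subseteq T(x_i)$ of size $3$. By \eqref{eq_common-neighbors}, every vertex of $A_i$ is also adjacent to $y_i$ when $D_i$ is an edge. It therefore suffices to verify Hall's condition $|N_B(S)|\ge 3|S|$ for every nonempty $S\subseteq X$.

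Since the sets $T(x)$ are nested, $N_B(S)=\bigcup_{x\in S}T(x)$ equals $T(x_0)$ for the vertex $x_0\in S$ with the largest $T(x_0)$. Thus we need $|T(x_0)|\ge 3|S|$. Let $x_0=u_i$. By Claim~\ref{claim_clique-neighborhoods}, $|T(x_0)|\ge d_i^*-1\ge d_{G}(x_0)-1\ge 2t-1$. Every vertex of $S$ has $T(x)\subseteq T(x_0)$, and each vertex of $G^*-Q$ has degree at most $|T(x)|+1$. Hence all vertices of $S$ have degree at most $|T(x_0)|+1$ in $G^*$.

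The plan is to bound $|S|$ using toughness. Put $W=T(x_0)$ together with the secondary vertices $y$ of those components $D_j$ whose primary vertex lies in $S$. Deleting $W$ isolates every vertex of $S$: each $x\in S$ has neighbors only in $T(x)\subseteq T(x_0)$ and possibly its partner $y$. The remainder $Q\setminus T(x_0)$ is nonempty, since $|Q|>n/2$ while $|T(x_0)|\le d^*_i<q<|Q|$. So $G-W$ has at least $|S|+1$ components. Toughness, which passes from $G$ to the supergraph $G^*$ with the same $W$ because $G\subseteq G^*$, then gives $|W|\ge t(|S|+1)$.

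The difficulty is that $|W|\le |T(x_0)|+|S|$ because of the secondary vertices. This yields $|T(x_0)|\ge (t-1)|S|+t\ge 3|S|$ for $t\ge 7$, which is more than enough. The main obstacle is justifying that deleting the secondary vertices is legitimate, and that components are counted correctly even when a secondary vertex $y$ of a component with primary in $S$ has $T(y)\not\subseteq T(x_0)$. This does no harm, since $y$ is placed in $W$ rather than counted as a component.

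A second subtlety: we must check that the isolated $x\in S$ are really separate components, that is, no two vertices of $S$ are adjacent. This holds because $S\subseteq X$ consists of primary vertices of distinct components of $G^*-Q$. Edges of $G$ are edges of $G^*$, so components in $G^*-W$ refine to components in $G-W$, and the count is still at least $|S|+1$. With these checks, Hall's condition follows, and Lemma~\ref{lem_generalized-marriage} yields the sets $A_1,\ldots,A_r$.
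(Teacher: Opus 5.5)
Your overall strategy is the right one: Hall's condition via Lemma~\ref{lem_generalized-marriage} plus a toughness count. However, the isolation step has a genuine gap. You use that every $x\in S$ has neighbors only in $T(x)$ and possibly its partner, i.e.\ that $N_{G^*}(x)\cap Q=T(x)$ for every primary vertex. Claim~\ref{claim_clique-neighborhoods} gives only $|N_{G^*}(x)\setminus T(x)|\le 1$. Moreover, \eqref{eq_common-neighbors} asserts $N_{G^*}(x_i)\cap Q=T(x_i)$ only for edge components, where the partner $y_i$ is that one extra neighbor.

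If $D_i=\{x\}$ is a single vertex with $x=u_i$, nothing rules out an extra neighbor $u_j\in Q$ with $q+1\le j<\ell_i$. The proof of Claim~\ref{claim_clique-neighborhoods} allows $d_i^*=n-\ell_i+1$, and the degree ordering of $Q$ breaks ties arbitrarily. Such a $u_j$ is not in $T(x)$; for $x=x_0$ it is not in $T(x_0)$, so it is not in your $W$. Then $x$ lies in the component of $G^*-W$ containing the clique $Q\setminus W$, and the count $c(G^*-W)\ge|S|+1$ is not justified. Relatedly, $N_B(S)=\bigcup_{x\in S}T(x)$ holds only as $\supseteq$, and $A_i\subseteq T(x_i)$ only for edge components. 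Those two inaccuracies are harmless, since $\supseteq$ and $A_i\subseteq N_{G^*}(x_i)$ are all you need.

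The repair is to delete all neighbors of $S$, i.e.\ take $W=N_{G^*}(S)$. Each vertex of $S$ has at most one neighbor outside $Q$, so still $|W|\le|N_B(S)|+|S|$, and now every vertex of $S$ really is isolated in $G^*-W$. You can no longer count $Q\setminus W$ as an extra component, since it may be empty, but you do not need it. For $|S|\ge2$, toughness gives $|W|\ge t|S|$, hence $|N_B(S)|\ge(t-1)|S|\ge3|S|$. For $|S|=1$, your bound $|N_B(S)|\ge d_{G^*}(x)-1\ge 2t-1\ge3$ suffices. This is precisely the paper's argument, which phrases it as the contradiction $|W|<4|S|\le t\,c(G^*-W)$.
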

	
	\begin{proof}[Proof of Claim~\ref{claim_matching}]
		By Lemma~\ref{lem_generalized-marriage} with $f(x)=3$ for all
		$x\in X$, it suffices to show that $|N_B(S)|\ge 3|S|$ for
		every $S\subseteq X$. The assertion is immediate for $S=\emptyset$.
		If $S=\{x_i\}$, then
		$|N_B(S)|\ge d_{G^*}(x_i)-1\ge 2t-1\ge 3$.
		
		Now suppose that $|S|\ge 2$ and $|N_B(S)|<3|S|$.
		Put $W=N_{G^*}(S)$. Since $X$ contains one vertex from each
		component of $G^*-Q$, it is independent. Each vertex of $S$
		has at most one neighbor outside $Q$, so
		$|W|\le |N_B(S)|+|S|<4|S|$.
		Every vertex of $S$ is isolated in $G^*-W$, and hence
		$c(G^*-W)\ge |S|\ge 2$. This contradicts the $t$-toughness
		of $G^*$, since
		$|W|<4|S|\le t\,c(G^*-W)$.
		
		The generalized marriage lemma now gives pairwise disjoint
		three-element sets $A_i\subseteq N_B(x_i)$.
		For an edge component, \eqref{eq_common-neighbors} shows that
		$A_i$ is also contained in the neighborhood of its secondary
		vertex. This proves the claim.
	\end{proof}
	
	\begin{claim}
		\label{claim_1-hamiltonian}
		The graph $G^*$ is $1$-Hamiltonian.
	\end{claim}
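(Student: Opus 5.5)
We have to show that $G^*-v$ is Hamiltonian for every $v\in V(G^*)$, and also that $G^*$ itself is Hamiltonian. The tools are the clique $Q$, the components $D_1,\ldots,D_r$ of $G^*-Q$, and the pairwise disjoint attachment triples $A_1,\ldots,A_r$ from Claim~\ref{claim_matching}. The idea is to route a cycle through the clique and make a short detour into each component. For each $i$ pick two vertices $a_i,b_i\in A_i$. If $D_i=\{x_i\}$, the detour is the path $a_ix_ib_i$; if $D_i=x_iy_i$, it is $a_ix_iy_ib_i$. This is legitimate because every vertex of $A_i$ is adjacent to every vertex of $D_i$. Since the $A_i$ are disjoint and $Q$ is a clique, we string the detours together as
\[
a_1\,D_1\,b_1\,a_2\,D_2\,b_2\cdots a_r\,D_r\,b_r\,P\,a_1 .
\]
Each step $b_i a_{i+1}$ is an edge inside $Q$. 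Here $P$ is any ordering of the remaining vertices $Q\setminus\bigcup_i\{a_i,b_i\}$ inserted between $b_r$ and $a_1$, which is possible because $Q$ is complete. This gives a Hamilton cycle of $G^*$. We need $r\ge 1$; this holds because $G^*$ is not complete, so $q\ge1$.

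Next we delete a vertex $v$ and split into cases according to where $v$ lies.

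\textbf{Case (i): $v\in Q$ and $v$ is in no $A_i$.} Simply omit $v$ from $P$. This is fine because $Q-v$ is still a clique. The only thing to check is that the closing edges still exist, and they do: all the connecting edges lie in $Q$, which stays complete after removing $v$.

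\textbf{Case (ii): $v\in A_i$ for some $i$.} Here the third vertex of $A_i$ is used. We choose $a_i,b_i$ to be the two vertices of $A_i\setminus\{v\}$ and proceed as before, with $v$ thrown out of $P$.

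\textbf{Case (iii): $v\in D_i$ and $D_i$ is a single vertex.} Drop that detour, and put $a_i$ and $b_i$ into the clique part $P$.

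\textbf{Case (iv): $v\in D_i$ and $D_i$ is an edge.} Use the detour $a_i\,w\,b_i$, where $w$ is the surviving vertex of $D_i$. This works because $w$ is adjacent to all of $A_i$, and for the secondary vertex this follows from~\eqref{eq_common-neighbors}.

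In all cases the count is right: the resulting closed walk visits every vertex of $G^*-v$ exactly once. We also need $G^*$ to have at least $4$ vertices, which holds trivially since $n\ge 4t$.

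The main point to verify is that the skeleton really is a cycle in the degenerate situations. First, if $r=1$, the cycle is $a_1D_1b_1Pa_1$; if in addition $P$ is empty, it becomes $a_1D_1b_1a_1$, which uses the edge $b_1a_1$ inside $Q$ and is still a cycle, of length at least $3$. Second, after deleting $v$ in Case (iii) we may have no components left at all. Then what remains is the clique $Q$ on $|Q|>n/2\ge3$ vertices, which is Hamiltonian.

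Beyond these degenerate checks, no further obstacle is expected. The substantive work was already done in Claim~\ref{claim_matching}, where the disjoint triples were produced; having three vertices in each $A_i$ rather than two is exactly what absorbs the deletion of one attachment vertex in Case (ii).
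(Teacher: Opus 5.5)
Your proof is correct and matches the paper's argument. The paper likewise takes two surviving vertices $a_i,b_i\in A_i\setminus S$ for each surviving component, forms a Hamilton cycle of the clique $G^*[Q\setminus S]$ through the matching edges $a_ib_i$, and replaces each such edge by a detour through $D_i-S$; this is your cycle $a_1D_1b_1\cdots a_rD_rb_rPa_1$, and your cases (i)--(iv) simply unpack the paper's uniform choice of $a_i,b_i$ from $A_i\setminus S$, with the third vertex of $A_i$ absorbing a deletion in $A_i$ as in the paper.
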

	
	\begin{proof}[Proof of Claim~\ref{claim_1-hamiltonian}]
		Let $S\subseteq V(G^*)$ with $|S|\le 1$, and put $Q'=Q\setminus S$.
		For every $i$ with $V(D_i)\setminus S\ne\emptyset$, choose distinct
		$a_i,b_i\in A_i\setminus S$. These choices are possible because
		$|A_i|=3$. By Claim~\ref{claim_matching}, the edges $a_ib_i$
		form a matching in the complete graph $G^*[Q']$.
		Also, $|Q'|\ge 3$, since $|Q|>n/2$ and $n>4t\ge 28$.
		Thus $G^*[Q']$ has a Hamilton cycle containing all these edges.
		
		For each surviving component $D_i-S$, replace $a_ib_i$ on this
		cycle by a path from $a_i$ to $b_i$ through all vertices of
		$D_i-S$. Such a path exists because $D_i-S$ is a single vertex
		or an edge and both $a_i$ and $b_i$ are adjacent to every one
		of its vertices. The replacements are disjoint and give a
		Hamilton cycle of $G^*-S$. Since $S$ was arbitrary, $G^*$ is
		$1$-Hamiltonian.
	\end{proof}
	
	Lemma~\ref{lem_s-hamiltonian-closure} now implies that $G$ is
	$1$-Hamiltonian. By \eqref{eq_large-maximum-degree} and
	Lemma~\ref{lem_s-hamiltonian}, $G$ is pancyclic. This completes
	Case 2.3 and the proof of the theorem.
\end{proof}

\section*{Declaration on Use of AI}
ChatGPT Pro was used to polish the language of an initial draft prepared by the authors. The authors reviewed and verified all AI-assisted revisions and take full responsibility for the content of the manuscript.

\bibliographystyle{abbrv}
\bibliography{pancyclic}

\end{document}